\newtheorem{conjecture}{Conjecture}
\newtheorem{theorem}[equation]{Theorem}
\newtheorem{lemma}[equation]{Lemma}
\newtheorem{prop}[equation]{Proposition}
\newtheorem{cor}[equation]{Corollary}
\newtheorem{corollary}[equation]{Corollary}
\newtheorem{definition}[equation]{Definition}
\theoremstyle{remark}
\newtheorem{remark}[equation]{Remark}
\newtheorem{assumption}[equation]{Assumption}
\numberwithin{equation}{section}
\definecolor{light-gray}{gray}{.95}
\newcommand{\Sph}{\mathbb{S}}
\newcommand{\Spheq}{\mathbb{S}^2_{\mathrm{eq}}}
\newcommand{\Rcap}{\mathsf{R}}
\newcommand{\Rcapunder}{{\mathsf{R}}}
\newcommand{\Ecal}{\mathcal{E}}
\newcommand{\N}{\mathbb{N}}
\newcommand{\R}{\mathbb{R}}
\newcommand{\B}{\mathbb{B}}
\newcommand{\Ncal}{\mathcal{N}}
\newcommand{\Coord}{\mathcal{C}}
\newcommand{\Acal}{\mathcal{A}}
\newcommand{\Scal}{\mathcal{S}}
\newcommand{\Vcal}{\mathcal{V}}
\newcommand{\group}{G}
\newcommand{\groupref}{G_{\mathcal{P}}}
\newcommand{\RP}{\mathbb{R}P}
\newcommand{\firsteigen}{\mathcal{E}_{\sigma_1}}
\newcommand{\Ecali}{\mathcal{E}_{\sigma_i}}
\newcommand{\rhotilde}{\widetilde{\rho}}
\newcommand{\Vs}{\widetilde{\mathcal{C}}}
\newcommand{\kcir}{k_{\mathrm{cir}}}
\begin{document}

\title[]{On Steklov Eigenspaces for Free Boundary Minimal Surfaces in the Unit Ball}
\author[R.~Kusner]{Robert~Kusner}
\author[P.~McGrath]{Peter~McGrath}
\date{}
\address{Department of Mathematics, University of Massachusetts,
Amherst, MA, 01003} \email{profkusner@gmail.com, kusner@umass.edu}
\address{Department of Mathematics, North Carolina State University, Raleigh NC 27695} 
\email{pjmcgrat@ncsu.edu}

\begin{abstract}
We develop new methods to compare the span $\Coord(\Sigma)$ of the coordinate functions on a free boundary minimal submanifold $\Sigma$ embedded in the unit $n$-ball $\B^n$ with its first Steklov eigenspace $\firsteigen(\Sigma)$.  Using these methods, we show that $\Coord(A)=\firsteigen(A)$ for any embedded free boundary minimal annulus $A$ in $\B^3$ invariant under the antipodal map, and thus prove that $A$ is congruent to the critical catenoid.  We also confirm that $\Coord=\firsteigen$ for any free boundary minimal surface embedded in $\B^3$ with the symmetries of many known or expected examples, including: examples of any positive genus from stacking at least three disks; two infinite families of genus $0$ examples with dihedral symmetry, as well as a finite family with the various Platonic symmetries; and examples of any genus by desingularizing several disks that meet at equal angles along a diameter of the ball.
\end{abstract}
\maketitle

\section{Introduction}
\label{S:intro}

Geometers have made tremendous progress  \cite{FraserSurvey,LiSurvey} in developing existence theory for free boundary minimal surfaces in the last decade.  The rich variety of methods employed include variational techniques \cite{FraserSchoen, Maximo}, singular perturbation gluing methods \cite{Zolotareva, KapLi, KapWiygul}, and min-max constructions \cite{Carlotto, DeLellis, LiZhou, Ketover}.  On the other hand, uniqueness results for free boundary minimal surfaces in the Euclidean unit ball $\B^n$ are rare. 
 Although Nitsche \cite{Nitsche} and Fraser-Schoen~\cite{FSdisk} have used Hopf-differential methods to 
 show the only free boundary minimal disk in $\B^n$ is congruent to a flat equator, the important and challenging problem of classifying free boundary minimal surfaces remains open, even for annuli: 

\begin{conjecture}[Fraser-Li, \cite{FraserLi}]
\label{Ccc} 
Up to congruence, the critical catenoid is the only properly embedded free boundary minimal annulus in $\B^3$.   
\end{conjecture}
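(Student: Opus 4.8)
The plan is to work in conformal (isothermal) coordinates, in which the minimal annulus $\Sigma$ is realized as a conformal harmonic immersion $X$ of a flat cylinder $C_T = (\mathbb{R}/2\pi\mathbb{Z})\times[0,T]$ with complex coordinate $z = t+is$. Since $\Sigma$ is minimal each coordinate function $x_i$ is harmonic, and since $\Sigma$ meets $\mathbb{S}^2 = \partial B^3$ orthogonally the outward conormal $\nu$ along $\partial\Sigma$ coincides with the position vector $X$; hence $\partial_\nu x_i = x_i$ on $\partial\Sigma$, so the coordinates are Steklov eigenfunctions with eigenvalue $1$. I would first record the Hopf differential $Q = \langle X_{zz}, N\rangle\, dz^2$, which is holomorphic on $C_T$ by minimality, and examine its boundary behavior: the free boundary condition forces $Q$ to be real along each boundary circle $\{s=0\}$ and $\{s=T\}$.

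The key reduction is to show that holomorphicity together with this double reality condition pins down $Q = c\,dz^2$ for a real constant $c$. Expanding $Q$ in Fourier modes $\sum_n a_n e^{inz}\,dz^2$, reality on $\{s=0\}$ gives $a_{-n} = \overline{a_n}$, while reality on $\{s=T\}$ gives $a_n = \overline{a_{-n}}\,e^{2nT}$; together these force $a_n(1-e^{2nT}) = 0$, so every nonconstant mode vanishes and $Q = a_0\,dz^2$ with $a_0\in\mathbb{R}$. If $a_0 = 0$ the immersion would be totally geodesic, i.e.\ a flat equatorial disk, contradicting the annular topology, so $a_0\neq 0$; after rescaling we normalize it. This is precisely the Hopf-differential structure of the catenoid, with principal curvature lines aligned to the cylinder coordinates, and I would then write the Weierstrass data (Gauss map $g$, height differential) and translate the orthogonality condition at $s=0,T$ into boundary conditions on $g$.

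The remaining --- and genuinely hardest --- step is to upgrade ``$Q$ is a constant multiple of $dz^2$'' to rotational symmetry, from which $\Sigma$ is forced to be a catenoid and the orthogonality condition then fixes the modulus $T$ to single out the critical catenoid. Here I expect two complementary tools. First, an Alexandrov-type moving-planes argument adapted to the free boundary setting: since $\Sigma$ is properly embedded and meets $\mathbb{S}^2$ orthogonally, one reflects across planes through the origin and slides them, using orthogonality to initiate and continue the reflection up to the boundary, producing a plane of symmetry, and iterating to recover an $\mathbb{S}^1$ of rotations. Second, a spectral route: show the coordinate functions are first Steklov eigenfunctions --- the embedded annular topology constraining the nodal sets of the $x_i$ via a Courant-type estimate --- so that they span the first eigenspace $\firsteigen$, and invoke rigidity of $\sigma_1 = 1$. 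The principal obstacle is exactly this symmetrization: unlike the antipodally invariant case, there is no a priori symmetry to exploit, and controlling the moving plane as it approaches $\mathbb{S}^2$ --- ruling out degenerate boundary tangencies along $\partial\Sigma$ and premature interior contact --- is the delicate point on which the unconditional conjecture turns.
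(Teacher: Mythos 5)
There is a genuine gap --- indeed, there must be, because the statement you were asked about is an \emph{open conjecture}: the paper does not prove it, and proves only the special case of antipodally invariant annuli (Theorem \ref{Tcc}). Your Hopf-differential computation is correct as far as it goes: the free boundary condition makes each boundary circle a line of curvature, so $Q$ is real on both boundary circles, and the Fourier argument correctly forces $Q = a_0\, dz^2$ with $a_0 \in \R$, nonzero since a totally umbilic minimal annulus with free boundary is impossible. But this is a standard fact that holds for \emph{every} free boundary minimal annulus, actual or hypothetical, and it carries no rigidity whatsoever: it says only that the surface is umbilic-free with principal directions along the cylinder coordinates. Calling it ``precisely the Hopf-differential structure of the catenoid'' is misleading --- any putative non-catenoidal competitor would have the same Hopf differential, so this step cannot distinguish the critical catenoid. (Contrast the disk case of Nitsche and Fraser--Schoen, where Schwarz reflection forces $Q \equiv 0$ on the sphere and umbilicity does the work; on the annulus the space of such differentials is one-dimensional rather than trivial, which is exactly why the annular problem is hard.)

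Both of the routes you propose for the ``genuinely hardest'' step have concrete, known obstructions, which you acknowledge but do not overcome. The moving-planes scheme as described fails at the outset: reflection across a plane \emph{not} through the origin does not preserve $B^3$, so the reflected piece leaves the ball and there is no admissible maximum-principle comparison near the free boundary; only planes through the center are available, which kills the one-parameter ``sliding'' family on which Alexandrov's method depends (a rotating-planes variant would be needed, and nobody has made it work here). The spectral route is precisely the known reduction: Fraser--Schoen already proved that $\sigma_1 = 1$ implies the annulus is the critical catenoid, so ``show the coordinate functions span $\firsteigen$'' is not a tool but a restatement of the open problem. The paper's actual contribution is to establish $\sigma_1 = 1$ \emph{under antipodal symmetry}, by combining the Lima--Menezes two-piece property with the boundary nodal-domain principle (Lemma \ref{Lsym}): symmetry splits $\Coord^\perp$ into symmetric and antisymmetric parts, each killed by a nodal-line analysis that pairs a hypothetical eigenfunction with a linear function $\varphi \in \Coord$ vanishing on a suitable plane through the origin. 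Without a symmetry hypothesis there is no analogous control on the nodal set of a putative first eigenfunction orthogonal to the coordinates, and a Courant bound on the number of nodal domains alone does not exclude an eigenvalue below $1$. In short: your first step is correct but vacuous for rigidity, and the step on which everything turns is exactly the part left unproved --- as it must be, since the conjecture remains open.
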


Fraser-Schoen established \cite{FraserSchoen} this conjecture provided the first Steklov eigenvalue $\sigma_1$ of the annulus equals $1$, and it was later shown \cite{McGrath} that $\sigma_1=1$ holds assuming the annulus is symmetric with respect to reflections through the coordinate planes.  A key result of this paper reduces the symmetry assumption as far as possible short of the full conjecture: 

\begin{theorem}
\label{Tcc}
Let $A$ be an embedded, antipodally-invariant free boundary minimal annulus in $\B^3$.  Then $A$ is congruent to the critical catenoid.
\end{theorem}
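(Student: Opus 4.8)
The plan is to reduce the theorem to the spectral statement $\sigma_1(\Sigma)=1$ and then invoke the Fraser--Schoen characterization of the critical catenoid. The coordinate functions $x_1,x_2,x_3$ are harmonic on the minimal surface $\Sigma$ and satisfy $\partial_\nu x_i = x_i$ along $\partial\Sigma$ by the free boundary condition, so each is a Steklov eigenfunction with eigenvalue $1$; since the antipodal symmetry makes them odd, their boundary integrals vanish by pairing antipodal points, so they are admissible test functions and yield $\sigma_1\le 1$. The entire burden is therefore the lower bound $\sigma_1\ge 1$, i.e. the exclusion of any Steklov eigenvalue in the open interval $(0,1)$.

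Next I would set up the symmetry. Since a free boundary minimal annulus does not pass through the origin, the antipodal map $\tau=-\mathrm{id}$ restricts to a fixed-point-free isometric involution of $\Sigma$. The Dirichlet-to-Neumann operator commutes with $\tau$, so the Steklov spectrum splits into a $\tau$-even part ($f\circ\tau=f$) and a $\tau$-odd part ($f\circ\tau=-f$), with the coordinate functions lying in the odd part at eigenvalue $1$. I would then pin down the quotient topology: the two boundary circles of $\Sigma$ are disjoint, while two distinct $\tau$-invariant circles on $\Sph^2$ would be great circles and hence intersect; thus $\tau$ must interchange the two boundary circles. Consequently $\tau$ is orientation-reversing and $\hat\Sigma=\Sigma/\tau$ is a M\"obius band with a single boundary circle, so $\tau$-even functions descend to $\hat\Sigma$ while $\tau$-odd functions become sections of the associated orientation line bundle.

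The core is to show that neither symmetry class contributes an eigenvalue in $(0,1)$. For the odd class I would aim to prove the sharp trace inequality $\int_\Sigma |\nabla\phi|^2 \ge \int_{\partial\Sigma}\phi^2$ for every $\tau$-odd $\phi$, so that the first odd eigenvalue equals $1$ and is realized by the coordinate functions; here the embedding $x\colon\Sigma\to\R^3$ serves as a frame and one exploits that $\tau$ freely interchanges the two nodal domains of an odd eigenfunction. For the even class, which descends to the M\"obius quotient $\hat\Sigma$, I would seek a contradiction: an even eigenfunction with eigenvalue in $(0,1)$, together with the three coordinate eigenfunctions at eigenvalue $1$, should force more low-lying, antipodally-structured eigenfunctions than an equivariant Courant-type nodal-domain count on the annulus permits, the count being constrained by the single-boundary-circle M\"obius quotient. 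Combining first-odd $=1$ with first-even-positive $\ge 1$ would give $\sigma_1=\min=1$.

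The main obstacle is precisely this lower bound on the non-coordinate eigenvalues. In McGrath's coordinate-reflection setting the function space splits into eight symmetry types and the eigenvalue bookkeeping becomes essentially one-dimensional; with only the single antipodal involution one has merely the coarse even/odd splitting, and the bound must be squeezed out of the geometry of the embedding together with the orientation-reversing, single-boundary structure of $\hat\Sigma$. I expect the sharp odd trace inequality and a careful equivariant nodal-domain analysis on the M\"obius quotient to be the delicate points on which the whole argument turns.
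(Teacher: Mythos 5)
Your reduction to $\sigma_1=1$ via Fraser--Schoen, the even/odd splitting under the antipodal involution $\tau$, and the observation that $\tau$ must swap the two boundary circles (so that $\Sigma/\tau$ is a M\"obius band) all match the paper's setup. But the two central steps --- excluding eigenvalues in $(0,1)$ in each symmetry class --- are left as aspirations rather than arguments, and the route you gesture at for the odd class is not viable as stated. A direct proof of the trace inequality $\int_\Sigma |\nabla\phi|^2 \ge \int_{\partial\Sigma}\phi^2$ for all $\tau$-odd $\phi$ is essentially as hard as the spectral statement itself, and nothing in your sketch (``the embedding serves as a frame'') indicates how antipodal symmetry would produce it. The ingredient you are missing is the Lima--Menezes two-piece property: every nontrivial linear function on an embedded free boundary minimal surface in $B^3$ has exactly two nodal domains. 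The paper combines this with the Courant theorem (a first eigenfunction has exactly two nodal domains) and an orthogonality trick: if $u$ is a $\tau$-odd first eigenfunction orthogonal to the coordinate functions, one chooses a plane $P$ through the origin adapted to the boundary nodal set of $u$ (tangent to $\partial\Sigma$ at a point when $u$ does not change sign on either boundary circle, or passing through the two sign-change points otherwise); the two-piece property forces the linear function $\varphi$ vanishing on $P$ to have boundary nodal structure matching that of $u$, whence $\int_{\partial\Sigma} u\varphi = 2\int_{\partial\Sigma\cap\Omega} u\varphi \neq 0$, contradicting orthogonality. Without the two-piece property there is no control on the nodal set of $\varphi$ and this argument cannot start.

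For the even class your ``equivariant Courant count'' is replaced in the paper by a short topological fact, and your assessment of where the difficulty lies is inverted: the even case is the easy one. A $\tau$-even first eigenfunction has two disjoint $\tau$-invariant nodal domains; each contains an embedded $\tau$-invariant loop, and two such loops in an annulus on which $\tau$ freely exchanges the boundary circles must intersect (their images in the M\"obius band are both one-sided), a contradiction. So the quotient picture you set up does carry the even case, but the odd case is where the genuinely new input --- the two-piece property together with the boundary nodal domain lemma --- is required, not a sharp Rayleigh-quotient bound.
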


The technique used to prove Theorem \ref{Tcc} is a special case of a general method developed in this paper for comparing the span $\Coord(\Sigma)$ of the coordinate functions on a free boundary minimal submanifold $\Sigma$ in $\B^n$ with its first Steklov eigenspace $\firsteigen(\Sigma)$; these coordinate functions $\Coord(\Sigma)$ are Steklov eigenfunctions with eigenvalue $1$, and Fraser-Li conjectured the following:
 
 \begin{conjecture}[Fraser-Li, \cite{FraserLi}]
 \label{CFL}
 Let $\Sigma$ be a properly embedded minimal hypersurface in the Euclidean unit ball $\B^n$ with free boundary on $\partial \B^n$.  Then the first Steklov eigenvalue $\sigma_1$ of $\Sigma$ equals $1$. 
 \end{conjecture}
 \noindent

Analogously, the coordinate functions on an $n$-dimensional minimal hypersurface in $\Sph^{n+1}$ are Laplace eigenfunctions with eigenvalue $n$, so Conjecture \ref{CFL} is the analog of a well-known conjecture of Yau \cite{Yau:Problems}:
 \begin{conjecture}[Yau, \cite{Yau:Problems}]
 \label{Cyau}
 Let $\Sigma$ be a closed embedded minimal hypersurface of the round unit sphere $\Sph^{n+1}$.  Then the first eigenvalue $\lambda_1$ of the Laplacian on $\Sigma$ equals $n$. 
 \end{conjecture}  
\noindent

The first general evidence for Yau's conjecture was provided by Choi-Wang \cite{ChoiWang}, who proved $\lambda_1\geq \frac{n}{2}$.  Their method was used by Fraser-Li \cite{FraserLi} to prove $\sigma_1 \geq \frac{1}{2}$ in the free boundary case, in support of Conjecture \ref{CFL}.  Choe-Soret \cite{ChoeSoret} verified Conjecture \ref{Cyau} for a large number of minimal surfaces in $\Sph^3$, including the Lawson surfaces \cite{Lawson} and the Karcher-Pinkall-Sterling surfaces \cite{KPS}.  Their method applies to minimal surfaces invariant under certain symmetry groups generated by reflections through great two-spheres and was later adapted to the free boundary minimal setting \cite{McGrath}.  In the setting of $\Sph^3$, it was observed \cite{KWW} that the Choe-Soret method can be modified to show the first Laplacian eigenspace $\Ecal_{\lambda_1}$ is spanned by the four coordinate functions on these examples; this led those authors to propose the following strengthening of Yau's Conjecture:
\begin{conjecture}[\cite{KWW}]
The first Laplacian eigenspace $\Ecal_{\lambda_1}(\Sigma)$ on each closed embedded minimal hypersurface of the unit sphere $\Sph^{n+1}$ coincides with the span $\Coord(\Sigma)$ of its coordinate functions.
\end{conjecture}

 In light of our work in this paper, we propose to strengthen Conjecture \ref{CFL} as follows:
\begin{conjecture}
The first Steklov eigenspace $\firsteigen(\Sigma)$ on each properly embedded minimal hypersurface $\Sigma\subset\B^n$ with free boundary on $\partial \B^n$ coincides with the span $\Coord(\Sigma)$ of its coordinate functions.
\end{conjecture}

In recent years, constructions of free boundary minimal surfaces have emerged for which the methods of \cite{McGrath} do not apply.  In particular, surfaces like the Kapouleas-Li desingularizations  \cite{KapLi} of the critical catenoid and an equatorial disk, or Kapouleas-Wiygul's  triplings \cite{KapWiygul} of the equatorial disk, have symmetries---like halfturns about lines or antipodal symmetries---beyond those considered in \cite{McGrath}.

\subsection*{Brief discussion of the results}
\phantom{ab}
\nopagebreak

Our main result, Theorem \ref{Tgroup}, shows that $\firsteigen = \Coord$ when $\Sigma$ is an embedded free boundary minimal surface in $\B^3$, provided $\Sigma$ is invariant under a group of isometries $G$ satisfying certain technical properties.  Theorem \ref{Tgroup} subsumes the results in \cite{McGrath}, applies to families of known examples outside the scope of \cite{McGrath}, and also to any surfaces with the symmetries of families expected to exist, including:

\begin{itemize} 
\item Stackings of $k\geq 2$ copies of the equatorial disk, including triplings of any genus \cite{KapWiygul} and doublings \cite{Zolotareva}.
\item Genus zero examples: a finite family with the symmetries of the various Platonic solids \cite{Ketover, GL, KOO}, and infinite families with dihedral symmetry---doublings from \cite{Zolotareva}, another family with an additional pair of opposed vertical ends, and families from \cite{KapZou}.
\item Examples of any genus desingularizing $m\geq 2$ disks meeting at equal angles along a diameter of $\B^3$ \cite{KapLi}.
\end{itemize}

In the setting of embedded minimal surfaces in $\Sph^3$, we prove an analogous result, Theorem \ref{Tsphere}, which generalizes \cite[Theorem 4.2]{ChoeSoret} and applies to examples outside of the scope of \cite{ChoeSoret} such as doublings of the equatorial sphere~\cite{McGrathKap1}.

\subsection*{Brief discussion of the methods}
\phantom{ab}
\nopagebreak

The new methods in this paper are based on the two-piece property for free boundary minimal hypersurfaces in the unit ball \cite{LimaMenezes},
and apply to surfaces invariant under symmetry groups $G$ generated by reflections through general subspaces $V$ of $\R^3$, including (for the first time) zero- and one-dimensional subspaces.

The basic idea, following earlier work \cite{ChoeSoret, McGrath}, 
is to decompose $\firsteigen$ into a sum of subspaces, each consisting of eigenfunctions that are either even or odd with respect to generators for $G$.

With suitable topological assumptions about the quotient orbifold $\Sigma/G$, we are able to show (see Proposition \ref{Psym}) using the Courant nodal domain theorem that the space $\firsteigen^G$ of $G$-invariant elements of $\firsteigen$ is trivial. 

For each generator $\rho$ of $G$, we next consider the subspace $\Acal_\rho( \firsteigen)$ spanned by eigenfunctions satisfying $u \circ \rho = - u$, with the aim of showing that $\Acal_\rho(\firsteigen)$ is a subspace of the set of coordinate functions $\Coord_{V^\perp}$ vanishing on the fixed-point set $V$ of $\rho$.  The case where $V$ is a hyperplane is already well-understood \cite{McGrath, ChoeSoret}, and for each nonzero $u \in \Acal_{\rho}( \firsteigen)$ it follows (see \cite[Lemma 3.2]{McGrath}  and Corollary \ref{c} below) that $\int_{\partial \Sigma} u \varphi \neq 0$ for each nonzero $\varphi \in \Coord_{V^\perp}$.

The case where $V$ has higher codimension---for example, where $V = \{0\}$, $\Coord_{V^\perp} = \Coord$, and $\rho$ is the antipodal map---is more subtle because $V$ does not separate $\R^n$, and it is not generally true that $\int_{\partial \Sigma} u \varphi \neq 0$ for $\varphi \in \Coord_{V^\perp}$. However, in certain cases, we use the two-piece property to nonetheless align along the boundary the nodal domains of $u$ and a judiciously chosen $\varphi \in \Coord_{V^\perp}$ to conclude that $\int_{\partial \Sigma} u \varphi \neq 0$ and thus that $u$ and $\varphi$ lie in the same eigenspace. 

In the setting of either Theorem \ref{Tcc} or \ref{Tgroup}, we use the preceding ideas to prove $\firsteigen = \Coord$.  In the former case---when $\Sigma$ is an antipodally-symmetric free boundary minimal annulus---we appeal to a result of Fraser-Schoen \cite{FraserSchoen} to conclude that $\Sigma$ is the critical catenoid.  Their argument---which works in any codimension---parametrizes $\Sigma$ by a conformal harmonic map $\Phi$ from the round annulus to $\B^n$ and exploits $\sigma_1 = 1$ to show its angular derivative $\Phi_\theta$ extends to a rotational Killing field on $\B^n$, hence that $\Sigma$ is rotationally symmetric. 

Interestingly, we are not able to exploit reflectional symmetries through general dimensional subspaces $V$ in the setting of closed minimal surfaces in $\Sph^3$.  Although the analogous two-piece property holds \cite{Ros}, in order to parallel the argument outlined above, the nodal domains of a nonzero $u \in \Acal_\rho(\Ecal_{\lambda_1})$ and an appropriate $\varphi \in \Coord_{V^\perp}$ would have to agree on $\Sigma$---rather than on $\partial \Sigma$ as above---in order to conclude $\int_{\Sigma} u \varphi \neq 0$.

\subsection*{Acknowledgments}
$\phantom{ab}$
\nopagebreak

We are grateful to Ailana Fraser for her interest and for several helpful discussions.  We also thank Nikos Kapouleas for enlightenment about desingularizations of disks meeting in a common diameter, and W{\"o}den Kusner for pointing out how Corollary \ref{Cfs}(i) implies \ref{Cfs}(ii).  Finally, we thank the referee, whose detailed and comprehensive suggestions improved the paper. 

\section{The Steklov Problem and Riemannian Symmetries}
\label{Smain}
Let $(\Sigma, \partial \Sigma)$ be a smooth, compact, connected Riemannian $m$-manifold with boundary.  Let $\eta$ be the unit outward pointing conormal vector field on $\partial \Sigma$.  The \emph{Steklov eigenvalue problem} is
\begin{equation}
\label{Esteklov}
  \begin{cases} 
      \hfill \Delta u  = 0    \hfill & \text{in}\quad \Sigma \\
      \hfill \frac{\partial u}{\partial \eta} = \sigma u \hfill & \text{on}\quad \partial \Sigma \\
  \end{cases}
\end{equation}
and we call a nontrivial harmonic function $u\in\mathcal{H}(\Sigma)$ satisfying \eqref{Esteklov} a \emph{Steklov eigenfunction}.  
The eigenvalues of \eqref{Esteklov} are the spectrum of the \emph{Dirichlet-to-Neumann map} $\mathcal{D}: C^\infty(\partial \Sigma)\cong\mathcal{H}(\Sigma) \rightarrow C^{\infty}(\partial \Sigma)\cong\mathcal{H}(\Sigma)$ given by
\[ \mathcal{D} u = \frac{\partial u}{\partial \eta} \]
where we have identified the harmonic extension of $u$ to $\Sigma$ with its boundary values.  With respect to the $L^2(\partial \Sigma)$ inner product, $\mathcal{D}$ is a self-adjoint pseudodifferential operator with discrete spectrum
\[ 0 = \sigma_0 < \sigma_1 \leq \sigma_2 \leq \cdots. \]
We denote the $\mathcal{D}$-eigenspace corresponding to $\sigma_i$ by $\Ecal_{\sigma_i}$.

The \emph{nodal set} of an eigenfunction $u$ is $ \Ncal_u := \{ p \in \Sigma  : u(p) = 0\}$ and
a \emph{nodal domain} of $u$ is a connected component of $\Sigma \setminus \Ncal_u$.  
 The following Courant-type nodal domain theorem is standard  \cite{Proc}:
\begin{lemma}
\label{Lcourant}
Each nonzero $u \in \firsteigen$ has exactly two nodal domains. 
\end{lemma}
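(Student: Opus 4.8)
The plan is to establish the two bounds separately: that a nonzero $u \in \firsteigen$ has at least two nodal domains, and at most two. For the lower bound, I would use that Steklov eigenfunctions belonging to distinct eigenspaces are $L^2(\Ps)$-orthogonal, together with the fact that the eigenvalue $\sigma_0 = 0$ has eigenspace the constants. Hence any $u \in \firsteigen$ satisfies $\int_{\Ps} u = 0$; since $u$ is a nonconstant harmonic function it cannot vanish identically on $\Ps$ (otherwise $u \equiv 0$ by the maximum principle), so $u$ changes sign on $\Ps$ and therefore has at least two nodal domains. I would also record, again via the maximum principle, that every nodal domain $\Omega$ meets $\Ps$: otherwise $u$ would be harmonic on $\Omega \subset \Sigma \setminus \Ps$ with $u = 0$ on $\partial \Omega \subset \Ncal_u$, forcing $u \equiv 0$ on $\Omega$.

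For the upper bound I would run a Courant-type argument built on the variational characterization
\[
\sigma_1 = \inf\left\{ \frac{\int_\Sigma |\nabla f|^2}{\int_{\Ps} f^2} : f \in H^1(\Sigma),\ \int_{\Ps} f = 0 \right\}.
\]
Suppose, for contradiction, that $u$ has at least three nodal domains, and select three of them $\Omega_1, \Omega_2, \Omega_3$. For $i = 1, 2$ let $v_i$ be the function equal to $u$ on $\Omega_i$ and $0$ on $\Sigma \setminus \Omega_i$; since $u$ is continuous and vanishes on $\partial \Omega_i \cap \Sigma$, each $v_i$ lies in $H^1(\Sigma)$. Green's identity, the boundary condition $\partial u / \partial \eta = \sigma_1 u$, and the vanishing of $u$ on the interior portion of $\partial \Omega_i$ then give
\[
\int_\Sigma |\nabla v_i|^2 = \int_{\Omega_i} |\nabla u|^2 = \sigma_1 \int_{\Ps \cap \partial\Omega_i} u^2 = \sigma_1 \int_{\Ps} v_i^2,
\]
where the denominators are positive because each $\Omega_i$ meets $\Ps$.

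Since $v_1$ and $v_2$ have disjoint supports, all cross terms vanish and any linear combination inherits this exact Rayleigh quotient. I would then choose $(c_1, c_2) \neq 0$ solving the single linear condition $\int_{\Ps}(c_1 v_1 + c_2 v_2) = 0$ and set $v = c_1 v_1 + c_2 v_2 \neq 0$. By the disjointness computation $v$ is an admissible competitor attaining the infimum $\sigma_1$, hence a minimizer and so a $\sigma_1$-eigenfunction; by elliptic regularity $v$ is smooth and harmonic on $\Sigma$. But $v \equiv 0$ on the nonempty open set $\Omega_3$, contradicting unique continuation for harmonic functions. Therefore $u$ has at most, and thus exactly, two nodal domains.

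The step I expect to be most delicate is the passage from ``the $H^1$ competitor $v$ attains the infimum'' to ``$v$ is a genuine smooth harmonic eigenfunction'': this requires the standard but nontrivial facts that a minimizer of the constrained Rayleigh quotient satisfies the Euler--Lagrange (Steklov) equations weakly and is then smooth by elliptic regularity, after which unique continuation applies to force the contradiction.
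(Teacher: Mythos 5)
Your proposal is correct. Note that the paper itself offers no proof of this lemma at all --- it simply declares the result ``standard'' and cites Kuttler--Sigillito --- so there is nothing in the text to compare against line by line; what you have written is precisely the standard Courant-type argument that the citation stands in for. Both halves check out: the lower bound via $L^2(\Ps)$-orthogonality to the constants plus the maximum principle, and the upper bound via truncations $v_1, v_2$ with disjoint supports, a nontrivial linear combination satisfying the single constraint $\int_{\Ps} v = 0$, exactness of the Rayleigh quotient on each piece, and unique continuation applied on the third nodal domain. You correctly identify the delicate step (minimizer $\Rightarrow$ weak eigenfunction $\Rightarrow$ smooth, with the Lagrange multiplier from the constraint vanishing because $\int_{\Ps} \partial v/\partial\eta = \int_\Sigma \Delta v = 0$). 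The only other point worth flagging, which you gloss slightly, is the membership $v_i \in H^1(\Sigma)$ and the validity of Green's identity on a nodal domain whose boundary contains the (possibly singular) nodal set; in the surface case the paper's later description of $\Ncal_u$ as a piecewise-$C^1$ graph with finitely many vertices makes this harmless, and in general dimension it is handled by the usual approximation argument. Your observation that every nodal domain must meet $\Ps$ (else $u$ vanishes on its boundary and hence on the domain) is exactly what guarantees the denominators $\int_{\Ps} v_i^2$ are positive, so the argument closes.
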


Although we do not carry out the details, many of the results of this section hold with only minor modifications if $\Sigma$ is a closed manifold and \eqref{Esteklov} is replaced with the Laplace eigenvalue problem $\Delta u + \lambda u=0$.  For later use (see  Theorem \ref{Tsphere}), we call such a function $u$ a Laplace eigenfunction, recall the Laplacian $\Delta$ has a discrete spectrum $0 = \lambda_0 < \lambda_1 \leq \lambda_2 \leq \cdots$, and denote the eigenspace corresponding to $\lambda_i$ by $\Ecal_{\lambda_i}$.

\subsection*{Involutive isometries and even-odd decompositions}
$\phantom{ab}$
\nopagebreak

Suppose now that $\Sigma$ admits an involutive isometry $\rho$. 
 Then $\rho$ induces a linear involutive isometry of each $\Ecal_{\sigma_i}$ by the map $u \mapsto u \circ \rho$ and hence $\Ecali$ has an orthogonal direct sum decomposition $\Ecali = \Acal_\rho (\Ecali)\oplus \Scal_\rho(\Ecali)$ into odd and even parts
\[ \Acal_\rho(\Ecali) := \{u \in \Ecali: u\circ \rho =-u\} \, \, \,  
\text{and} \, \, \, 
\Scal_\rho(\Ecali) := \{ u \in \Ecali : u \circ \rho = u\}.
\]

More generally,  we call a function $u$ even (odd) under an isometry $\rho$ if $u \circ \rho = u$ ($u \circ \rho = -u$) and call a set of functions $\rho$-even ($\rho$-odd) if each element is even (odd) under $\rho$.  The following \emph{boundary nodal domain principle} will be useful later for characterizing $\Acal_{\rho}(\firsteigen)$.
\begin{lemma}
\label{Lsym}
Suppose $\Sigma$ admits an involutive isometry $\rho$ and there are Steklov eigenfunctions $u$ and $\varphi$ satisfying the following properties:
\begin{enumerate}[label=\emph{(\alph*)}]
\item $u$ and $\varphi$ are $\rho$-odd; 
\item $u$ has exactly two nodal domains;

\item $u$ and $\varphi$ have nodal domains $\Omega$ and $\Omega_\varphi$ such that $\partial \Sigma \cap \Omega=\partial \Sigma \cap \Omega_\varphi$ $\partial \Sigma$-almost everywhere. 
 
\end{enumerate}
Then $\int_{\partial \Sigma} u \varphi \neq 0$, so $u$ and $\varphi$ are in the same eigenspace. 
\end{lemma}
\begin{proof}
Since $u\circ \rho = -u$, the nodal domains of $u$ are $\Omega$ and $\rho(\Omega)$.
Thus, 
\begin{align*}
\int_{\partial \Sigma} \varphi   u  &= \int_{\partial \Sigma \cap \Omega} \varphi  u  + \int_{\partial \Sigma \cap \rho (\Omega)} \varphi  u   = 2\int_{\partial \Sigma \cap\Omega} \varphi u.
\end{align*} 
But $u$ has a sign on $\Omega$, $\varphi$ has a sign on $\Omega_\varphi$, and $\partial \Sigma \cap \Omega$ coincides with $ \partial \Sigma \cap \Omega_\varphi$ almost everywhere on $\partial\Sigma$, so $\int_{\partial \Sigma} \varphi  u \neq 0$.
\end{proof}

\subsection*{Group invariant eigenfunctions and their nodal domains}
$\phantom{ab}$
\nopagebreak

 If $G$ is a finite group of isometries of $\Sigma$, we say $u$ is $G$-invariant if $u$ is $\rho$-even for each $\rho\in G$.  It is convenient to define
\begin{align*}
\Ecali^G : = \{ u \in \Ecali : u \circ \rho = u, \rho \in G\}.
\end{align*} 
Given an isometry $\rho$ of $\Sigma$, we denote by $\Sigma^\rho$ the fixed point set of $\rho$.  We say an isometry $\rho$ of $\Sigma$ is a \emph{reflection} if for some $p \in \Sigma^\rho$, the tangent map $T_p \rho$ is a reflection in the Euclidean space $(T_p \Sigma, \left. g\right|_p)$ with respect to a hyperplane.  Finally, if $G$ is a finite group of isometries of $\Sigma$, we denote by $\Sigma/G$ the quotient and $\pi_G : \Sigma \rightarrow \Sigma/G$ the quotient map.  We will sometimes write $\pi_G(\Sigma)$ instead of $\Sigma/G$ for convenience.

\begin{lemma}
\label{L2nd}
Suppose that $\group$ is a finite group of isometries of $\Sigma$ and $u$ is a $\group$-invariant eigenfunction with exactly two nodal domains.  
Then the following hold, where $\Omega$ is either of the nodal domains of $u$:
\begin{enumerate}[label=\emph{(\roman*)}]
\item  $\pi_G (\Omega)$ is connected and $\pi_G (\Omega\cap \partial \Sigma) \neq \emptyset$.
\item  If $K$ is an index two subgroup of $G$, $\rho \in G\setminus K$ is a reflection, and $\pi_K(\Sigma^\rho)$ separates $\pi_K(\Sigma)$, then $\pi_G(\Omega\cap\Sigma^\rho) \neq \emptyset$.
\end{enumerate}
\end{lemma}
\begin{proof}
Because the Steklov eigenfunction $u$ is nonconstant, $\int_{\partial \Sigma} u =0$, hence $\Omega \cap \partial \Sigma \neq \emptyset$ since $u$ has exactly two nodal domains.  Applying the map $\pi_G$, item (i) follows.

Next consider the quotient $\pi_K(\Sigma)$.  Because $K$ has index two, $K$ is normal in $G$ and $\rho$ preserves each $K$ orbit.  Therefore, there is a well-defined involution $\rhotilde: \pi_K(\Sigma) \rightarrow \pi_K(\Sigma)$ satisfying $\rhotilde \circ \pi_K = \pi_K \circ \rho$.

Since $\rho$ is a reflection, since $\pi_K(\Sigma^\rho)$ separates $\pi_K(\Sigma)$, and since $\pi_K(\Sigma)$ is connected, $\pi_K(\Sigma)\setminus \pi_K(\Sigma^\rho)$ has precisely two components, which are moreover exchanged by $\rhotilde$.  Therefore, $\pi_K(\Omega)$ has nonempty intersection with each of these components, and it follows that $\pi_K(\Omega \cap \Sigma^\rho) \neq \emptyset$ since $\pi_K(\Sigma^\rho)$ separates $\pi_K(\Sigma)$ and $\pi_K(\Omega)$ is connected. 
\end{proof}

\subsection*{Surfaces and symmetry groups}
$\phantom{ab}$
\nopagebreak

We will be primarily interested in situations where $\Sigma$ is a surface and the quotient orbifold $\pi_G(\Sigma)$ is a polygon, and in the remainder of this section, we assume $\Sigma$ is two-dimensional.  The following definition will be convenient for our applications: 

\begin{definition}
\label{Dirred}
Consider a polygon whose cyclically ordered boundary edges $e_1, \dots, e_n$ are assigned labels $l_1, \dots, l_n$.  We say that the labeling is \emph{reducible} if there exist $1\leq i \leq j\leq n$ such that
\begin{align*}
\{ l_i, l_{i+1}, \dots, l_j\}= \bigcup_{i=1}^n \{l_i \} \quad \text{and} \quad
\{ l_j, \dots, l_n, l_1, \dots, l_i\}=\bigcup_{i=1}^n \{l_i\}. 
\end{align*}
We say the labeling is \emph{irreducible} if it is not reducible. 
\end{definition}

\begin{remark}
\label{Rirred}
If a labeled polygon with at least three edges has a pair of adjacent edges each of whose labels is assigned to no other edge, then clearly the labeling is irreducible.  This property holds for most of the labeled polygons arising from the surfaces $\Sigma$ considered in Section \ref{Sapplications} (compare with Remark \ref{Rred}).
\end{remark}

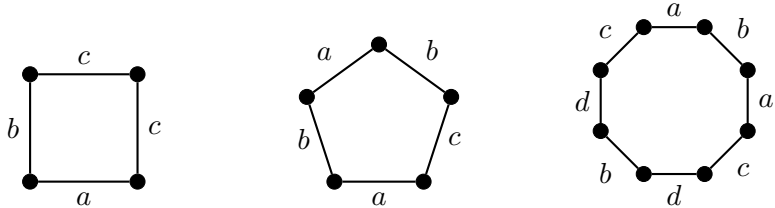
\begin{figure}[h]
\centering
\begin{subfigure}[t]{.3\textwidth}
\centering
\begin{tikzpicture}[mystyle/.style={draw,shape=circle, minimum size=.1cm, inner sep=2pt, fill=black}]
\node[regular polygon,regular polygon sides=4,minimum size=2cm] (p) {}; 
\foreach\x in {1,...,4}{\node[mystyle] (p\x) at (p.corner \x){};} 

\draw[thick] (p3)--(p4) node[midway,  below]{$a$};
\draw[thick] (p4)--(p1) node[midway,  right]{$c$};
\draw[thick] (p1)--(p2) node[midway, above]{$c$};
\draw[thick] (p2)--(p3) node[midway,  left]{$b$};

\end{tikzpicture}
\end{subfigure}
~
\begin{subfigure}[t]{.3\textwidth}
\centering
\begin{tikzpicture}[mystyle/.style={draw,shape=circle, minimum size=.1cm, inner sep=2pt, fill=black}]
\node[regular polygon,regular polygon sides=5,minimum size=2cm] (p) {}; 
\foreach\x in {1,...,5}{\node[mystyle] (p\x) at (p.corner \x){};} 

\draw[thick] (p3)--(p4) node[midway, below]{$a$}; 
\draw[thick] (p4)--(p5) node[midway, right]{$c$};
\draw[thick] (p5)--(p1) node[midway, above right]{$b$};
\draw[thick] (p1)--(p2) node[midway, above left ]{$a$};
\draw[thick] (p2)--(p3) node[midway,  left]{$b$};
\end{tikzpicture}
\end{subfigure}
~
\begin{subfigure}[t]{.3\textwidth}
\centering
\begin{tikzpicture}[mystyle/.style={draw,shape=circle, minimum size=.1cm, inner sep=2pt, fill=black}]
\node[regular polygon,regular polygon sides=8,minimum size=2.1cm] (p) {}; 
\foreach\x in {1,...,8}{\node[mystyle] (p\x) at (p.corner \x){};} 
\draw[thick] (p5)--(p6) node[midway, below]{$d$};
\draw[thick] (p6)--(p7) node[midway, below right]{$c$};
\draw[thick] (p7)--(p8) node[midway, right]{$a$};
\draw[thick] (p8)--(p1) node[midway, above right]{$b$};
\draw[thick] (p1)--(p2) node[midway, above ]{$a$};
\draw[thick] (p2)--(p3) node[midway,  above left]{$c$};
\draw[thick] (p3)--(p4) node[midway,  left]{$d$}; 
\draw[thick] (p4)--(p5) node[midway, below left ]{$b$};

\end{tikzpicture}
\end{subfigure}
\caption{A square with an irreducible labeling; reducible labelings on a pentagon and octagon.}
\label{F1}
\end{figure}

It is a standard fact \cite{Cheng, FraserSchoen, Kokarev} that the nodal set $\Ncal_u$ of a Steklov eigenfunction on a surface is a piecewise-$C^1$ graph consisting of finitely many edges where an even number of edges meet each vertex at equal angles.  We call a $C^1$ arc in $\Ncal_u$ whose endpoints lie on $\partial \Sigma$ a \emph{nodal line}.  

\begin{prop}
\label{Psym}
Suppose $G$ is a finite group of isometries of $\Sigma$ generated by reflections $\{\rho_i\}_{i \in I}$  and that the following hold:
\begin{enumerate}[label=\emph{(\alph*)}]
\item $\pi_G(\Sigma)$ is a polygon with an irreducible labeling, where each edge is contained in one of the sets $\pi_G(\partial \Sigma)$ or $\pi_G(\Sigma^{\rho_i})$ for some $i\in I$ and is assigned the label of its containing set. 
\item For each $i\in I$, there is an index two subgroup $K_i$ of $G$ such that $\rho_i \notin K_i$ and $\pi_{K_i}(\Sigma^{\rho_i})$ separates $\pi_{K_i}(\Sigma)$. 
\end{enumerate}
Then there is no $G$-invariant eigenfunction with exactly two nodal domains. 
\end{prop}
\begin{proof}
Let $u$ be a nonconstant $G$-invariant Steklov eigenfunction.  We may suppose that $\pi_G(\Ncal_u)$ is a single arc intersecting the boundary of $\pi_G(\Sigma)$ in exactly two places, because otherwise---since $\pi_G(\Sigma)$ is a polygon and is in particular simply connected---at least one of $\pi_G(\{u> 0\})$ and $\pi_G(\{u<0\})$ fails to be connected, so $u$ has more than two nodal domains by \ref{L2nd}(i).  

Since the labeling of $\pi_G(\Sigma)$ is irreducible, some component of $\pi_G(\Sigma \setminus \Ncal_u)$ fails to intersect one of the $\pi_G( \Sigma^{\rho_i})$, so $u$ has more than two nodal domains by \ref{L2nd}(ii).
\end{proof}

\subsection*{Antipodal isometries}
\phantom{ab}
\nopagebreak

We now study the case where $\Sigma$ is a surface admitting an orientation reversing, fixed-point free, involutive isometry.  

\begin{lemma}
\label{Lgen0}
Suppose $\Sigma$ has genus zero and admits an orientation reversing, fixed-point free, involutive isometry $\alpha$.  Then $\firsteigen$ is $\alpha$-odd. 
\end{lemma}

\begin{proof}
Consider a nonzero $u\in \firsteigen$ with its two nodal domains.  Without loss of generality it suffices to assume $u$ is either $\alpha$-even or $\alpha$-odd. Note that $u$ is $\alpha$-even (respectively, $\alpha$-odd) if and only if $\alpha$ preserves (exchanges)  the nodal domains of $u$.

Let $G$ be the group of order two generated by $\alpha$.  Since $\alpha$ is fixed-point free, $\pi_G : \Sigma \rightarrow \pi_G (\Sigma)$ is a covering map, so a path joining any $p\in \Sigma$ to $\alpha(p)$ projects to an essential loop in $\pi_G(\Sigma)$.

  Because $\alpha$ is orientation reversing  and $\Sigma$ has genus zero, we may identify $\pi_G(\Sigma)$ with a subset of $\RP^2$.   But any two essential loops in $\RP^2$ must meet, so $\alpha$ must exchange the nodal domains, and hence $u$ is $\alpha$-odd. 
\end{proof}

\begin{remark}
If $\Sigma$ is as in Lemma \ref{Lgen0} but has positive genus, it is no longer the case that any two essential loops in $\pi_G(\Sigma)$ must meet.  For example, there exists a rectangular torus with four disks removed $\Sigma$ and a glide reflection $\alpha$ such that $\Sigma$ is a union of the closures of two disjoint $\alpha$-invariant domains.  In this example, $\pi_G(\Sigma)$ is a Klein bottle with two disks removed.
\end{remark}

\begin{prop}
\label{Pann}
Suppose $\Sigma$ is an annulus admitting an orientation reversing, fixed-point free, involutive isometry $\alpha$.  Moreover, suppose there is a three-dimensional space $\Vs$ of $\alpha$-odd eigenfunctions for which each nonzero element has exactly two nodal domains.  Then $\firsteigen = \Vs$.
\end{prop}
\begin{proof}
In order to prove that $\firsteigen = \Vs$, it suffices to show  for each nonzero $u \in \firsteigen$ that  $\int_{\partial \Sigma} u \varphi \neq 0$ for some $\varphi \in \Vs$.  

First note that $\alpha$ exchanges the components of $\partial \Sigma$ since $\alpha$ is orientation reversing and fixed-point free, and that $\firsteigen$ is $\alpha$-odd by Lemma \ref{Lgen0}. 

Consider the two alternatives for a nonzero  $u\in \firsteigen$.

Case 1: $u$ does not change sign on either component of $\partial \Sigma$.   Then $u$ has opposite signs on the two boundary components.  Choose a point $p\in \partial \Sigma$ and consider the linear map $T: \Vs\rightarrow \R^2$ defined by $T \varphi = ( \varphi(p), \frac{d\varphi}{ds}(p))$, where $\frac{d\varphi}{ds}(p)$ is the derivative with respect to arclength along $\partial \Sigma$ at $p$.  Since $\dim \Vs = 3$, there exists a nonzero $\varphi \in \Vs$ satisfying $\varphi(p) = \frac{\partial \varphi}{\partial s}(p) = 0$.  Since $\varphi$ has exactly two nodal domains and $\Ncal_\varphi = \alpha(\Ncal_\varphi)$, $\Ncal_\varphi$ consists of a topological circle joining $p$ and $\alpha(p)$  dividing $\Sigma$ into two connected sets. It follows that $\varphi$ cannot change sign on either component of $\partial \Sigma$.

Case 2: $u$ changes sign on a component of $\partial \Sigma$. Then there exist $p$ and $q$ in this component of $\partial \Sigma$ such that $\left. u \right|_{\partial \Sigma}$ changes sign at $p$ and at $q$.  Since $u$ has only two nodal domains, $\Sigma$ is an annulus, and $\Ncal_u = \alpha(\Ncal_u)$, it follows that $\Ncal_u = \ell \cup \alpha(\ell)$, where $\ell$ is a nodal line joining $p$ to $\alpha(q)$.  Consider the linear map $T: \Vs \rightarrow \R^2$ defined by $T \varphi = ( \varphi(p), \varphi(q))$.  Since $\dim \Vs = 3$, there exists a nonzero $\varphi \in \Vs$ satisfying $\varphi(p) = \varphi(q) = 0$. Since $\varphi$ has exactly two nodal domains, $\Sigma$ is an annulus, and $\Ncal_\varphi = \alpha(\Ncal_\varphi)$,  then $\Ncal_\varphi = \ell' \cup \alpha(\ell')$, where $\ell'$ is a nodal line joining $p$ to $\alpha(q)$.

In either case, Lemma \ref{Lsym} applies with $u$ and $\varphi$, so $\int_{\partial \Sigma} u \varphi \neq 0$. 
\end{proof}

\begin{remark}
If $\Sigma$ is an annulus (or any genus zero surface) and $\Vs$ is a space of eigenfunctions for which each nonzero element has exactly two nodal domains, arguments in \cite{FraserSchoen,Kokarev} show that $\dim \Vs \leq 3$. 
\end{remark}

\section{Free Boundary Minimal Submanifolds in $\B^n$ and Symmetries}
Consider a \emph{free boundary minimal submanifold} of the ball, that is, a properly immersed minimal submanifold $\Sigma^m$  of $\B^n\subset \R^n$ such that $\Sigma$ meets $\partial \B^n$ orthogonally along $\partial \Sigma$. We denote by $\Coord = \Coord(\Sigma)$ the span of the coordinate functions on $\Sigma$, which are Steklov eigenfunctions with eigenvalue $1$.

One of the purposes of this paper is to prove that $\firsteigen = \Coord$ for certain free boundary minimal surfaces; we remark this condition forces isometric rigidity:
\begin{prop}
\label{Piso}
Let $\Sigma^m$ be a Riemannian manifold admitting a full proper isometric immersion $X: \Sigma^m \rightarrow \B^{m+1}$ as a free boundary minimal hypersurface, and suppose $\Ecal_1 =\Coord$.   Then any other isometric immersion $Y: \Sigma\rightarrow \B^n$ of $\Sigma$ as a free boundary minimal submanifold is congruent to $X$, in other words $Y = T\circ X$ for some linear isometric embedding $T: \R^{m+1}\rightarrow \R^n$. 
\end{prop}
\begin{proof}
The assumptions imply the components of $Y$ are linear combinations of the components of $X$, so the resulting linear map $T: \R^{m+1} \rightarrow \R^n$ such that $Y = T\circ X$ has rank $m+1$.  Since $X(\partial \Sigma)\subset \partial \B^{m+1}$ and  $Y(\partial \Sigma) = (T\circ X)(\partial \Sigma) \subset \partial \B^n$, it follows that $T$ is an isometry onto its image. 
\end{proof}

We now study free boundary minimal submanifolds of $\B^n$ which are preserved by a reflection of the Euclidean space $\R^n$ whose fixed-point set is a given vector subspace $V\subset \R^n$.  Denote by $V^\perp$ the orthogonal complement of $V$ in $\R^n$, and define the reflection $\Rcapunder_V : \R^n \rightarrow \R^n$ with respect to $V$, by
\begin{align*}
\Rcapunder_V : = \Pi_V - \Pi_{V^\perp},
\end{align*}
where $\Pi_V$ and $\Pi_{V^\perp}$ are the orthogonal projections of $\R^n$ onto $V$ and $V^\perp$ respectively.  Note in particular that $\Rcapunder_{\{ 0\}}$ is the antipodal map, where $\{ 0\}$ is the trivial subspace of $\R^n$.  Finally, we denote
\begin{align*}
\Coord_V = \Coord_V(\Sigma) : = \{ \varphi \in \Coord : \left. \varphi \right|_{V^\perp} = 0\}.
\end{align*}

\begin{cor}
\label{c}
For each hyperplane $P$ of reflection symmetry of $\Sigma$, we have either $\Acal_{\Rcap_P}(\firsteigen) = \{ 0\}$ or $\Acal_{\Rcap_P}(\firsteigen) = \Coord_{P^\perp}$. 
\end{cor}
\begin{proof}
Choose a nonzero $\varphi \in \Coord_{P^\perp}$.  Since $P$ separates $\Sigma$, for any nonzero $u\in \Acal_{\Rcap_P}(\firsteigen)$, $u$ and $\varphi$ satisfy the conditions of \ref{Lsym} (with $\rho = \Rcap_P$).
\end{proof}

\section{On Free Boundary Minimal Annuli Embedded in the Ball}

An important property of the coordinate functions for free boundary minimal surfaces in $\B^3$ is the following result of Lima-Menezes \cite{LimaMenezes}:

\begin{theorem}[Two-piece property {\cite[Theorem A]{LimaMenezes}}]
Every nontrivial linear function $\varphi \in \Coord$ on an embedded free boundary minimal surface
 $\Sigma \subset \B^3$ has exactly two nodal domains. 
  \end{theorem}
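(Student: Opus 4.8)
The plan is to recast the two-piece property as a connectedness statement. Writing $\varphi=\langle\,\cdot\,,a\rangle$ for a unit vector $a$ and letting $P=a^\perp$ be the plane through the origin on which $\varphi$ vanishes, the nodal set is $\Ncal_\varphi=\Sigma\cap P$, and the nodal domains are exactly the connected components of the two open sets $\Sigma\cap\{\varphi>0\}$ and $\Sigma\cap\{\varphi<0\}$, i.e. of the intersections of $\Sigma$ with the two open half-balls cut out by $P$. Thus the theorem is equivalent to the assertion that $\Sigma$ meets each of these half-balls in a connected set. One cannot simply invoke Courant's nodal domain theorem here: although $\varphi$ is a Steklov eigenfunction with eigenvalue $1$, this eigenvalue may sit arbitrarily high in the spectrum, so Courant yields no useful upper bound and the content of the statement is genuinely geometric.

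I would first record the easy lower bound. Since $\Sigma$ is minimal, $\varphi$ is harmonic on $\Sigma$, and, as noted above, $\partial\varphi/\partial\eta=\varphi$ along $\partial\Sigma$ because the outward conormal of $\Sigma$ agrees with the position vector there. A nonconstant harmonic function attains its extrema on $\partial\Sigma$; if $\varphi\ge 0$ its minimum would be attained at some $p\in\partial\Sigma$, where the Hopf boundary point lemma gives $\partial\varphi/\partial\eta(p)<0$, contradicting $\partial\varphi/\partial\eta(p)=\varphi(p)\ge 0$. Hence $\varphi$ changes sign and $\Sigma$ has at least two nodal domains (the degenerate case $\Sigma\subset P$, where $\Sigma$ is a flat equatorial disk and $\varphi$ is trivial, is excluded).

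Next I would use the maximum principle to constrain the nodal domains. Every nodal domain $\Omega$ must meet the free boundary $\partial\Sigma$: otherwise $\varphi$ would vanish on the whole of $\partial\Omega\subset\Ncal_\varphi$, and solving the Dirichlet problem on $\Omega$ would force $\varphi\equiv0$ there. After perturbing $a$ slightly so that $\Sigma$ meets $P$ transversally, $\Ncal_\varphi=\Sigma\cap P$ is a properly embedded $1$-manifold in the disk $P\cap B^3$, consisting of arcs with endpoints on $\partial\Sigma\cap P$ together with closed loops; the same Dirichlet argument rules out any loop bounding a disk in $\Sigma$ that lies on one side of $P$. This reduces the problem to understanding how the boundary-anchored arcs of $\Ncal_\varphi$ can separate $\Sigma$.

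The crux --- and what I expect to be the main obstacle --- is to rule out two distinct components of $\Sigma$ lying in the same half-ball. This is precisely where embeddedness must enter essentially, as the two-piece property fails for immersed surfaces (a multiple cover manufactures spurious nodal domains). One natural line of attack is a Frankel/Alexandrov-type comparison: given two disjoint components $A_1,A_2\subset\Sigma\cap\{\varphi>0\}$, one studies how they can be separated and uses the strong maximum principle, together with the Hopf boundary lemma at contact points on $\partial B^3$ (exploiting orthogonality to the sphere), to constrain their relative position. The difficulty is that such comparisons tend to yield only a symmetry relation between the two pieces rather than an outright contradiction; extracting a genuine contradiction --- ruling out that an embedded free boundary minimal surface can place two boundary-anchored sheets on the same side of $P$ --- requires a more global topological or variational input, and is the technical heart of the Lima--Menezes argument.
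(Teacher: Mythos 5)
The statement you were asked to prove is not proved in the paper at all: it is quoted as an external result of Lima--Menezes \cite{LimaMenezes}, so there is no internal argument to compare yours against, and your attempt must stand on its own. It does not: there is a genuine gap, which you candidly acknowledge in your final sentence. The parts you do carry out are correct and standard --- harmonicity of $\varphi$ on the minimal surface together with the free boundary identity $\partial\varphi/\partial\eta=\varphi$ and the Hopf boundary point lemma shows $\varphi$ changes sign, giving at least two nodal domains; and the maximum principle shows every nodal domain must meet $\partial\Sigma$, so no component of $\Ncal_\varphi$ can bound a sign-definite disk. But these establish only the easy lower bound and some structure of the nodal set. The entire content of the theorem is the upper bound, i.e.\ that $\Sigma$ meets each open half-ball in a \emph{connected} set, and for that step you offer only the suggestion of a Frankel/Alexandrov-type comparison while conceding that such comparisons ``tend to yield only a symmetry relation \dots rather than an outright contradiction.'' That concession is accurate: a local tangency or moving-planes analysis between two sheets on the same side of $P$ does not by itself produce a contradiction here, so the proposal does not prove the theorem.

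For orientation, the missing input in the actual Lima--Menezes argument is global and variational rather than a local comparison: following Ros's two-piece property for closed minimal surfaces in $S^3$ \cite{Ros}, one studies an eigenvalue problem with mixed boundary conditions on the pieces of $\Sigma$ cut off by the plane $P$, applies a conformal transformation of the ball to balance the configuration so that suitable linear/coordinate functions become admissible test functions, and derives a contradiction with the relevant first-eigenvalue estimate when there are more than two pieces. Embeddedness enters through this global structure (the pieces are honest subdomains of an embedded surface, so the test-function and counting arguments apply), which is exactly why the property fails for immersed surfaces, as you correctly observe. Your write-up is a reasonable reduction and an honest assessment of where the difficulty lies, but it is not a proof.
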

\noindent

We are grateful to W. Kusner for pointing out \cite{WKusner} how (i) implies (ii) in the following extension of \cite[Prop. 8.1]{FraserSchoen}: 
\begin{corollary}[Radial graphs]
\label{Cfs}
Let $\Sigma \subset \B^3$ be an embedded free boundary minimal surface of genus zero and at least two boundary components.  Then: 
\begin{enumerate}[label=\emph{(\roman*)}]
\item $\Sigma$ is a radial graph, in the sense that $0\notin \Sigma$ and each ray from $0$ transversally intersects $\Sigma$ at most once; 
\item each component of $\partial \Sigma$ is a convex curve in $\Sph^2$.
\end{enumerate}
\end{corollary}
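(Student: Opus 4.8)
The plan is to establish (i) first and then deduce (ii), following the scheme of \cite[Prop.~8.1]{FraserSchoen} but substituting the two-piece property for the reflection symmetry used there. Write $N$ for a unit normal along $\Sigma$, let $s=\langle x,N\rangle$ be the support function, and let $\pi\colon\Sigma\to\Sph^2$, $\pi(x)=x/|x|$, be radial projection, defined away from $0$. Since $\ker d\pi$ is the radial direction, $\pi$ is an immersion at an interior point exactly when $s\neq0$ there, and (i) is equivalent to the assertion that $\pi$ is an embedding of $\Sigma$ onto a domain in $\Sph^2$.

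\textbf{Step 1 (transversality and $0\notin\Sigma$).} I would first show $s\neq0$ on the interior. Suppose $s(p)=0$ at an interior point $p$, and consider the coordinate function $\varphi=\langle\,\cdot\,,N(p)\rangle\in\Coord$. Then $\varphi(p)=s(p)=0$ and $\nabla^\Sigma\varphi(p)=N(p)^\top=0$, so $p$ is a critical zero of $\varphi$ whose Hessian is the trace-free second fundamental form of $\Sigma$ at $p$; hence $p$ is a saddle and the nodal set of $\varphi$ has a self-crossing at $p$ (with more branches if the Hessian degenerates). The crux is that, on a genus-zero surface, such an interior self-crossing is incompatible with having exactly two nodal domains: an Euler-characteristic count for the nodal graph of $\varphi$ shows that the alternating sectors at $p$ cannot be paired into only two domains without a handle, so $\varphi$ would have at least three nodal domains, contradicting the two-piece property. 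Thus $s\neq0$ in the interior; in particular $0\notin\Sigma$, since $s$ vanishes at $0$ whenever $0\in\Sigma$. As $\Sigma$ is connected we may then fix $N$ so that $s>0$ throughout the interior, and $\pi$ is an immersion.

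\textbf{Step 2 (injectivity).} With transversality in hand, injectivity of $\pi$ is elementary. A ray $t\mapsto t\omega$ meets $\Sigma$ transversally at each intersection, and at such a point the crossing sign equals $\operatorname{sign}\langle\omega,N\rangle=\operatorname{sign}s$. Since $\Sigma$ is embedded and two-sided, consecutive transverse intersections of the ray with $\Sigma$ have opposite crossing signs; but $s>0$ everywhere forces all crossing signs to agree, so each ray meets $\Sigma$ at most once. Hence $\pi$ is an injective immersion, i.e.\ an embedding onto a domain $\Omega\subset\Sph^2$ with $\overline{\Omega}=\pi(\Sigma)$ and $\partial\Omega=\partial\Sigma$, proving (i).

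\textbf{Step 3 ((i)$\Rightarrow$(ii)).} Finally I would transport the two-piece property through the embedding $\pi$. For a plane $P$ through $0$ with great circle $C=P\cap\Sph^2$, the set $\Sigma\setminus P$ is carried homeomorphically by $\pi$ to $\Omega\setminus C$, so the two-piece property says that every great circle meeting $\Omega$ divides it into exactly two pieces. A spherical domain with this separation property admits no great circle cutting off a third region, which rules out inflection points of $\partial\Omega$; equivalently, the geodesic curvature of each boundary component has a fixed sign, so each component of $\partial\Sigma=\partial\Omega$ is a convex curve in $\Sph^2$, giving (ii). The delicate point throughout is the nodal count in Step 1: one must verify that on a genus-zero surface an interior critical zero of a linear Steklov eigenfunction genuinely forces a third nodal domain. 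This is exactly where the genus-zero hypothesis enters and where the two-piece property replaces the coordinate-plane symmetry of \cite{FraserSchoen}, with Steps 2 and 3 comparatively formal by comparison.
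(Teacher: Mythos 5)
Your proposal is correct and takes essentially the same route as the paper: its item (i) is exactly your Steps 1--2 (the paper simply cites Fraser--Schoen Prop.~8.1 with the two-piece property substituted for $\sigma_1=1$, which is the support-function/nodal-crossing argument you spell out, including the genus-zero Euler-characteristic count you rightly flag as the delicate point), and its item (ii) is your Step 3, deducing convexity of $\partial\Sigma$ from the fact that every great circle separates the radial projection into exactly two pieces.
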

\begin{proof}
Item (i) follows as in \cite[Prop. 8.1]{FraserSchoen} by replacing the condition that $\sigma_1 = 1$ with the two-piece property.  Next, observe that (i) and the two-piece property imply each great circle separates the radial projection of $\Sigma$ to~$\Sph^2$ into precisely two components.  Hence each great circle has at most two transverse intersections with each component of $\partial \Sigma$, which yields (ii).
\end{proof}

We are now ready to prove Theorem \ref{Tcc}. 

\begin{proof}[Proof of Theorem \ref{Tcc}]
By Corollary \ref{Cfs}(i), the radial projection from $A$ to $\Sph^2$ is an antipodally-equivariant diffeomorphism onto its image.  Thus the restriction $\alpha$ of $\Rcap_{\{0\}}$  to $A$ is an orientation reversing, fixed-point free,  involutive isometry.

 Because each nonzero $\varphi \in \Coord(A)$ has exactly two nodal domains by the two-piece property, we may apply Proposition \ref{Pann} with $\Vs = \Coord(A)$ to conclude that $\firsteigen(A) = \Coord(A)$.  A result of Fraser-Schoen \cite[Theorem 6.6]{FraserSchoen} now implies that $A$ is the critical catenoid. 
\end{proof}

\section{Main Results}
\label{Sthms}

Under certain symmetry assumptions on a free boundary minimal surface $\Sigma\subset \B^3$, we now show that $\firsteigen(\Sigma) = \Coord(\Sigma)$ and in particular that $\sigma_1(\Sigma) = 1$.

\subsection*{Symmetry groups}
$\phantom{ab}$
\nopagebreak

Any finite isometry group of $\Sph^2 = \partial \B^3$ generated by reflections through planes has a geodesic polygon fundamental domain,  each of whose vertex angles is an integer fraction of $\pi$.
These groups are
\begin{align*}
*mm \quad
\ast \!22m \quad
\ast \!233 \quad
\ast \!234 \quad
\ast \!235, \quad
m \in \N
\end{align*}
in Conway-Thurston notation \cite{Conway}, where the numbers after the star denote the integer fractions. For example, $*mm$ has a fundamental digon with vertex angles $\pi/m$, and $*klm$ has a fundamental trigon whose vertex angles are $\pi/k, \pi/l$, and $\pi/m$.

For each $m \geq 2$, we will consider also the group $2*m$, which includes $*mm$ as an index two subgroup and contains also reflections about lines (half-turns) between adjacent mirror planes of $*mm$. 

\subsection*{Results for free boundary minimal surfaces in  $\B^3$}
$\phantom{ab}$
\nopagebreak

Let $\Sigma \subset \B^3$ be an embedded $G$-invariant free boundary minimal surface  satisfying the following (recall the notation of \ref{Psym}): 
\begin{assumption}
\phantom{ab}
\label{Ag}
\begin{enumerate}[label={(\alph*)}]
\item $G$ is either a finite group generated by reflections in hyperplanes, or $\group = 2\!*\!m$ for some $m\geq 2$.  In the latter case, we suppose further that the projection of the halfturn lines to $\pi_H(\Sigma)$ separates $\pi_{H}(\Sigma)$ and that $\pi_H(\Sigma)$ is simply connected, where $H = *mm$.
\item $\pi_G(\Sigma)$ is a polygon with an irreducible labeling, where each edge is contained in one of the sets $\pi_G(\partial \Sigma)$ or $\pi_G(\Sigma^{\rho_i})$ for some $i\in I$ and is assigned the label of its containing set. 
 \end{enumerate}
 \end{assumption}

\begin{theorem}
\label{Tgroup}
Suppose $\Sigma \subset \B^3$ is an embedded free boundary minimal surface satisfying Assumption \ref{Ag}.  Then $\firsteigen = \Coord$ and $\sigma_1(\Sigma)=1$. 
 \end{theorem}
\begin{proof}
Note that the hypotheses of Proposition  \ref{Psym} apply, so $\firsteigen^G = \{ 0\}$. 

We first consider the case where $G$ is generated by reflections through planes.  For each plane $P$ of symmetry of $\Sigma$, Corollary \ref{c} implies $\Acal_{\Rcap_P}(\firsteigen)=\{ 0\}$ or $\Acal_{\Rcap_P}(\firsteigen) = \Coord_{P^\perp}$.  The latter alternative holds for some such $P$ since $\firsteigen^G$ is trivial, so $\Coord \subset \firsteigen$ and hence the latter alternative holds for every such $P$.  The orthogonal complement of $\Coord$ in $\firsteigen$ is then contained in $\firsteigen^G = \{ 0\}$, so $\firsteigen = \Coord$.

We next consider the case where $G = 2*m$.  Let $H = *mm$ be the index two subgroup of $G$ generated by reflections through planes and pick a halfturn $\Rcap_L \in G \setminus H$ about a line $L$.  The first eigenspaces $\firsteigen$ splits as an orthogonal sum of subspaces
\begin{align*}
\firsteigen = \firsteigen^H \oplus (\firsteigen^H)^\perp &= \Scal_{\Rcap_L} ( \firsteigen^H) \oplus \Acal_{\Rcap_L}(\firsteigen^H) \oplus (\firsteigen^H)^\perp \\
&= \firsteigen^G \oplus \Vcal_1 \oplus \Vcal_2 =  \Vcal_1 \oplus \Vcal_2, 
\end{align*}
where $\Vcal_1 = \Acal_{\Rcap_L}(\firsteigen^H)$ and $\Vcal_2 =  (\firsteigen^H)^\perp $.

The action of $H$ on $\R^3$ fixes pointwise a one-dimensional subspace $W\subset \R^3$ (the intersection of the planes corresponding to reflections generating $H$) and fixes as a set its orthogonal complement $W^\perp$ in $\R^3$.  We will argue below that $\Vcal_1$ is either trivial or equal to $\Coord_{W}$, and that $\Vcal_2$ is either trivial or equal to $\Coord_{W^\perp}$.  This will complete the proof, since the nontriviality of $\firsteigen$ implies the nontriviality of one of $\Vcal_1$ and $\Vcal_2$, so that $\firsteigen \supset \Coord = \Coord_W \oplus \Coord_{W^\perp}$ and therefore $\firsteigen =  \Coord_W \oplus \Coord_{W^\perp}$.

The claimed dichotomy for $\Vcal_2$ follows by arguing as in the first case above.
Now consider a nonzero $u \in \Vcal_1$.  Since $u$ is $H$-invariant, $H$ and $u$ satisfy the hypotheses of Lemma \ref{L2nd}, with $H$ taking the role of $G$.  Similarly for any nonzero $\varphi \in \Coord_W$, by the two piece property \cite{LimaMenezes}, $H$ (again in the role of $G$) and $\varphi$ also satisfy the hypotheses of Lemma \ref{L2nd}. 

Since $\pi_{H}(\Sigma \cap L)$ separates $\pi_{H}(\Sigma)$  it follows from Lemma \ref{L2nd}(i) that $\pi_{H}(\Ncal_f) = \pi_{H}(L)$, where $f$ is either $\varphi$ or $u$.  By Lemma \ref{Lsym}, $\int_{\partial \Sigma} u \varphi \neq 0$.  It follows that $\Vcal_1 = \Coord_W$. 
\end{proof}

\subsection*{Results for minimal surfaces in $\Sph^3$}
$\phantom{ab}$
\nopagebreak

Most of the framework developed earlier leads to analogous conclusions for the first Laplace eigenspace $\Ecal_{\lambda_1}$ on a closed, $G$-invariant minimal surface $\Sigma$ embedded in $\Sph^3$ satisfying the following:

\begin{assumption}
\label{Agsph}
\phantom{ab}
\begin{enumerate}[label={(\alph*)}]
\item $G$ is a finite group generated by reflections in hyperplanes of $\R^4$.
\item $\pi_G(\Sigma)$ is a polygon with an irreducible labeling, where each edge is contained in one of the sets $\pi_G(\Sigma^{\rho_i})$ for some $i\in I$ and is assigned the label of its containing set. 
\end{enumerate}
\end{assumption}

\begin{theorem}
\label{Tsphere}
Suppose $\Sigma \subset \Sph^3$ is an embedded, closed minimal surface satisfying \ref{Agsph}.  Then $\Ecal_{\lambda_1} = \Coord$ and $\lambda_1(\Sigma) = 2$.
\end{theorem}
\noindent We omit the proof, which is similar to the proof of Theorem \ref{Tgroup}.

\section{Applications}
\label{Sapplications}
We now describe various applications of Theorem \ref{Tgroup} to show that $\firsteigen = \Coord$ for known or expected examples of free boundary minimal surfaces in $\B^3$, and of Theorem \ref{Tsphere} to show that $\Ecal_{\lambda_1} = \Coord$ on certain embedded minimal surfaces in $\Sph^3$. 

\subsection*{Stackings of the disk}
$\phantom{ab}$
\nopagebreak

 Doublings and triplings of the equatorial disk via gluing methods have been constructed in \cite{Zolotareva} and \cite{KapWiygul}.  More generally, Kapouleas-Wiygul observe  \cite[Remark 1.2]{KapWiygul} that their tripling construction can be extended to produce \emph{stackings} of the disk, that is, free boundary minimal surfaces resembling $N$ parallel copies of the equatorial disk, with each adjacent pair of copies joined by $m$ catenoidal strips in maximally symmetric fashion.  We show that for each $N\geq 2$, any surface $\Sigma$ consistent with their description satisfies the hypotheses of Theorem \ref{Tgroup}, hence $\firsteigen(\Sigma) = \Coord(\Sigma)$.   These examples fall into two classes depending on the parity of the number of copies $N$.

When $N=2k+1$ is odd, the symmetry group is $G = 2*m$ and the expected surface has connected boundary and genus $\frac{1}{2}(m-1)(N-1)$.  In particular, $\pi_G(\Sigma)$ is homeomorphic to a fundamental domain $D$ for $\Sigma$ bounded by mirror planes  $P_1$ and $P_2$, a halfturn line $L$, and $\partial \B^3$.  Its boundary is a $(3+k)$-gon consisting of edges in the following order: an edge along $\partial \Sigma$, an edge along $L$, edges alternating between $P_1$ and $P_2$ corresponding to the catenoidal strips joining adjacent disks, and an edge in $P_1$ joining the center of the bottom disk back to $\partial \Sigma$.

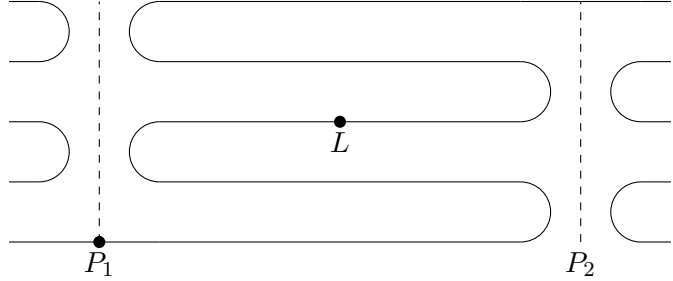
\begin{figure}[h]
\begin{tikzpicture}[scale=.8]
\foreach \i in {-2, -1, 0, 1, 2}{\draw[] (-3, \i)--(3, \i);} 
\foreach \i in {-1, 1}{\draw[] (3*\i, 2*\i)--(4*\i , 2*\i);} 
\foreach \i in {-1, 1}{\draw[dashed] (-4*\i , -2)--(-4*\i, 2);} 
\foreach \i in {-2, 0} {\draw[] (3, \i) arc (-90:90:.5);} 
\foreach \i in {-1, 1}{\draw[] (-3, 1*\i ) arc (270:90:.5);} 
\draw[] (0,0) node[below]{$L$};
\fill[black] (0, 0) circle (.1); 
\fill[black] (-4, -2) circle (.1);
\draw[] (-4,-2) node[below]{$P_1$};
\draw[] (4,-2) node[below]{$P_2$};
\foreach \i in {-2, -1, 0, 1, 2}{\draw[] (-5, \i)--(-5.5, \i);} 
\foreach \i in {-2, -1, 0, 1, 2}{\draw[] (5, \i)--(5.5, \i);} 
\foreach \i in {-1, 1}{\draw[] (5*\i, 2*\i)--(4*\i , 2*\i);} 
\foreach \i in {-1, 1} {\draw[] (-5, \i) arc (-90:90:.5);} 
\foreach \i in {-2, 0}{\draw[] (5, 1*\i ) arc (270:90:.5);} 
\end{tikzpicture} 
\caption{Side view schematic for a stacking of $5$ equatorial disks.  $D$ is bounded by an edge in $\partial \Sigma$ joining the marked points on $P_1$ and $L$, an edge in $L$ joining the marked point to the center of the middle disk, edges in each $P_1$ and $P_2$ joining neighboring disks via catenoidal strips, and an edge in $P_1$ joining the center of the bottom disk and the marked point.}
\end{figure}

If $N=2k$ is even, the symmetry group is $G = *22m$ and the expected surface $\Sigma$ has $m$ boundary components and genus $\frac{1}{2}(m-1)(N-2)$. Then, $\pi_G(\Sigma)$ is homeomorphic to a fundamental domain $D$ for $\Sigma$ bounded by mirror planes $P_1, P_2, P_3$, and $\partial \B^3$.  Its boundary is a $(3+k)$-gon consisting of the following: an edge along $\partial \Sigma$, an edge in $P_3$ along a quarter-catenoidal waist in the catenoidal strip joining the middle disks, edges alternating between $P_2$ and $P_1$ corresponding to the catenoidal strips joining adjacent disks, and a final edge in $P_1$ joining the center of the bottom disk back to $\partial \Sigma$.
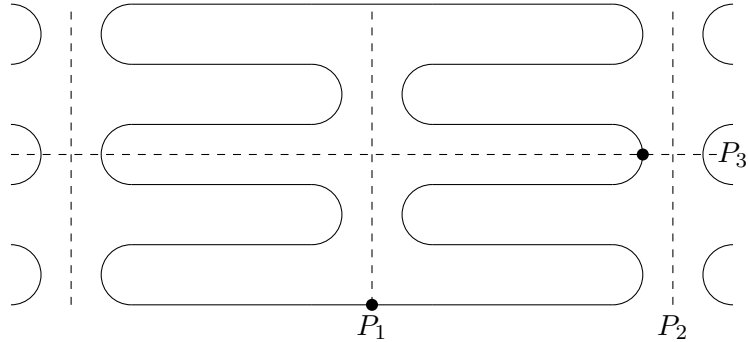
\begin{figure}
\centering
\begin{tikzpicture}[scale=.8]

\foreach \i in {-2, -1, 0, 1, 2, 3}
	{
	\draw[] (-4, -.5+\i)--(-1, -.5+\i);
	\draw[] (1, -.5+\i)--(4, -.5+\i);
	}
\draw[dashed] (0, -2.5)--(0, 2.5); 
\draw[dashed] (5, -2.5)--(5, 2.5); 
\draw[dashed] (-5, -2.5)--(-5, 2.5); 
\draw[dashed] (-6, 0)--(5.75, 0);

\foreach \i in {-1,0, 1}{\draw[] (-4, -.5+2*\i ) arc (270:90:.5);} 
\foreach \i in {-1,0, 1}{\draw[] (4, -.5+2*\i ) arc (-90:90:.5);} 
\foreach \i in {0, 1}{\draw[] (1, -1.5+2*\i ) arc (270:90:.5);} 
\foreach \i in {0, 1}{\draw[] (-1, -1.5+2*\i ) arc (-90:90:.5);} 
\foreach \i in {-1, 1}{\draw[] (-1,2.5*\i)--(1, 2.5*\i);}

\foreach \i in {-1,0, 1}{\draw[] (6, -.5+2*\i ) arc (270:90:.5);} 
\foreach \i in {-1,0, 1}{\draw[] (-6, -.5+2*\i ) arc (-90:90:.5);} 

\draw[] (0,-2.5) node[below]{$P_1$};
\draw[] (5,-2.5) node[below]{$P_2$}; 
\draw[] (6,0) node[]{$P_3$};
\draw[] (-6,0) node[]{\phantom{$P_3$}};

\fill[black] (0, -2.5) circle (.1); 
\fill[black] (4.5, 0) circle (.1); 
\end{tikzpicture} 
\caption{Side view schematic for a stacking of $6$ equatorial disks.  The six edges of $D$ consist of an edge along $\partial \Sigma$ joining the marked points on $P_1$ and $P_3$, an edge in $P_3$, three edges joining neighboring disks via catenoidal strips (two in $P_2$ and one in $P_1$), and an edge in $P_1$ joining the center of the bottom disk to the bottom marked point.}
\end{figure}

In each of the preceding cases, $\Sigma$ satisfies Assumption \ref{Ag} and the labeling is irreducible by Remark \ref{Rirred}, so Theorem \ref{Tgroup} applies.  When $N=3$, that the triplings in  \cite[Theorem 7.40]{KapWiygul} are actually $G = 2*m$-invariant for some $m\geq 2$ follows from \cite[Section 3]{KapWiygul}.  When $N=2$, it was shown in \cite{McGrath} that the doublings in \cite{Zolotareva} have $\sigma_1=1$. 

\subsection*{Desingularizations of disks meeting along a diameter}
$\phantom{ab}$
\nopagebreak

In the introduction of \cite{KapLi},  Kapouleas-Li discuss work in preparation on constructions of free boundary minimal surfaces by desingularizing two disks which meet orthogonally along a diameter of $\B^3$.  We show here that any free boundary minimal surfaces with certain symmetries consistent with desingularizations of $m\geq 2$ disks that meet equiangularly along a diameter are covered by Theorem \ref{Tgroup} and would necessarily have $\firsteigen = \Coord$.

Observe first that the configuration consisting of $m$ disks meeting at equal angles along a diameter is invariant under either $2*m$ or $*22m$.  The examples fall into two classes depending on the parity of the number $N$ of Scherk-like handles used.  In either class, however, the symmetry group is independent of $N$. 

\begin{figure}[h]
\centering
\def\ra{4} 
\begin{subfigure}[]{.4\textwidth}
\centering
\begin{tikzpicture}[scale=.7] 
\draw [draw=black, fill=light-gray] (0, 0) ellipse ({\ra/sqrt(2)} and {\ra});
\draw[dashed] (0, -\ra)--(0, \ra); 
\draw[dashed] (-{\ra/sqrt(2)}, 0)--({\ra/sqrt(2)}, 0); 

\draw[] (0,-\ra) node[below]{$P_1$}; 
\draw[] ({\ra/sqrt(2)}, 0) node[right]{$P_3$};
\draw[] (-{\ra/sqrt(2)}, 0) node[left]{\phantom{$P_3$}};

\foreach \j in {0, 1, 2}
{
\foreach \i in {-1, 1}
{
\draw [draw=black, fill=white] (0, {\i*(\j*\ra/3+\ra/6)}) circle ({\ra/12});
}
}
\end{tikzpicture} 
\end{subfigure}
\begin{subfigure}[]{.4\textwidth}
\centering
\begin{tikzpicture}[scale=.7]
\draw[] (0,-\ra) node[below]{$P_1$}; 
\draw[] ({\ra/sqrt(2)}, 0) node[right]{$L$};

\draw[] (-{\ra/sqrt(2)}, 0) node[left]{\phantom{$L$}};
   \draw [fill=light-gray, domain=-83:180+83, samples=100] plot ({\ra/sqrt(2)*cos(\x)}, {\ra*sin(\x)}); 
	
\draw[dashed, black] (0, -\ra+.1)--(0, \ra); 
\draw[black] (0, 0)--({\ra/sqrt(2)}, 0); 
\draw[dotted, black] (0, 0)--({-\ra/sqrt(2)}, 0); 

   \draw [fill=white, domain=3:177, samples=100] plot ({\ra/11*cos(\x)}, {-4+\ra/11*sin(\x)});

\foreach \j in {0, 1, 2}
{
\draw [fill=white](0, {\j*4*\ra/11+\ra/11}) circle ({\ra/11});
}
\foreach \j in {0, 1}
{
\draw [fill=white] (0, {-\j*4*\ra/11-3*\ra/11}) circle ({\ra/11});
}

\end{tikzpicture}
\end{subfigure}
\caption{Schematics, viewed in a mirror plane $P_2$, for desingularizations of $m=2$ two disks.  The left example has six handles, two boundary components, and is $*22m$-invariant, while the right has five handles, one boundary component, and is $2*m$-invariant.}
\end{figure}
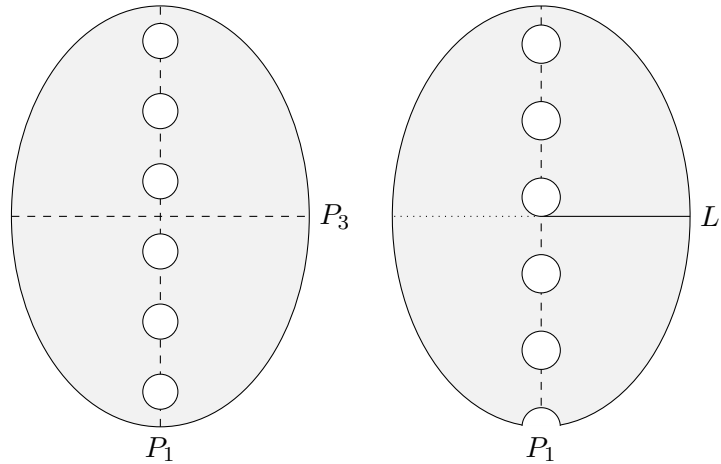

When $N=2k$ is even, the symmetry group is $G=*22m$ and the expected surface has two boundary components.  The orbifold $\pi_G(\Sigma)$ is homeomorphic to a fundamental domain $D$ on $\Sigma$ bounded by three mirror planes $P_1, P_2, P_3$ and $\partial \B^3$.  Its boundary is a $(3+2k)$-gon consisting of the following ordered edges: one along $\partial \Sigma$, one along $P_3$, and edges alternating between $P_1$ and $P_2$ traversing up the Scherk-like handles. 

When $N=2k+1$ is odd, the symmetry group is $G = 2*m$ and the expected surface has one boundary component.  The orbifold $\pi_G(\Sigma)$ is homeomorphic to a fundamental domain $D$ on $\Sigma$ bounded by mirror planes $P_1, P_2$, a halfturn line $L$, and $\partial \B^3$.  Its boundary is a $(4+2k)$-gon consisting of ordered edges, one in $\partial \Sigma$, one in $L$, and edges alternating between $P_2$ and $P_1$ traversing the Scherk-like handles. 

In each of the preceding cases, $\Sigma$ satisfies Assumption \ref{Ag} and the labeling is irreducible by Remark \ref{Rirred}, so Theorem \ref{Tgroup} applies. 

\subsection*{Genus zero examples with dihedral or Platonic symmetry}
$\phantom{ab}$

We have already observed that $\firsteigen = \Coord$ on a family of genus zero doublings of the disk \cite{Zolotareva} with dihedral $*22n$ symmetry and $n$ boundary components centered around a configuration of $n$ points equidistributed on an equatorial circle of $\partial \B^3$.  Here we catalog (see Table \ref{Tab1}) an additional infinite family of dihedrally-symmetric configurations of points, as well as 16 distinct configurations of points with Platonic symmetries, each of which  plausibly corresponds to a genus zero example to which Theorem \ref{Tgroup} applies.  The boundary components of a particular example are centered at the points of the corresponding configuration. 

\begin{table}[h]
\caption{Configurations with dihedral or Platonic symmetries.  By enforcing additional available symmetries, configurations $[2'33]$, $[23'3']$, and $[2'3'3']$ coincide with $[234']$, $[23'4]$, and $[2'3'4]$, respectively, leaving 16 Platonic configurations.}
\begin{tabular}{@{}l|cc@{}}
\toprule
Group                   & Configuration     & Ends             \\ \midrule
\multirow{2}{*}{$*22n$} & $[2'2n]=[22'n]$ & $n$ \\
				& $[2'2n']=[22'n']$ & $n+2$ \\ \midrule
\multirow{5}{*}{$*233$} & $[23'3]=[233']$   & $4$                       \\
                        & $[2'33]$          & $6$    \\
                        & $[23'3']$         & $8$    \\
                        & $[2'3'3]=[2'33']$ & $10$                    \\
                        & $[2'3'3']$        & $20$  \\ \midrule
\multirow{7}{*}{$*234$} & $[234']\cong[2'33]$          & $6$    \\
                        & $[23'4]\cong[23'3']$          & $8$  \\
                        & $[2'34]$          & $12$                     \\
                        & $[23'4']$         & $14$                      \\
                        & $[2'34']$         & $18$                      \\
                        & $[2'3'4]\cong[2'3'3']$         & $20$  \\
                        & $[2'3'4']$        & $26$                    \\ \midrule
\multirow{7}{*}{$*235$} & $[235']$          & $12$                   \\
                        & $[23'5]$          & $20$                      \\
                        & $[2'35]$          & $30$                      \\
                        & $[23'5']$         & $32$                      \\
                        & $[2'35']$         & $42$                      \\
                        & $[2'3'5]$         & $50$                      \\
                        & $[2'3'5']$        & $62$                      \\ \bottomrule
\end{tabular}
\label{Tab1}
\end{table}

 For each of the Platonic symmetry groups $*23n$ with $n=3,4,5$ (respectively tetrahedral, octahedral, and icosahedral) there are $2^3-1 =7$ configurations, corresponding to potential genus zero free boundary minimal surfaces through the (independent) choices of whether boundary components are centered on vertices, edge centers, or face centers of the underlying Platonic surface.  To name each configuration, we mark each digit $m$ among $2, 3$, and $n$ with a prime $'$ if the corresponding junction of $m$ mirror planes is the center of a boundary component.  There is some redundancy in the enumeration: several of the $*233$ configurations coincide due to the repeated digit 3, and several others admit additional symmetries making them the same as certain $*234$ configurations.

For any dihedral symmetry group $*22n$, the configurations $[2'2n']=[22'n']$ in Table \ref{Tab1} correspond to surfaces with $n+2$ boundary components, $n$ equally distributed around an equator, and two more centered at the poles. 

In each potential case, $\pi_G(\Sigma)$ is a polygon with either four, five, or six sides---corresponding in turn to the cases where the marking in Table \ref{Tab1} consists of one, two, or three primes---and there are distinct labels corresponding to each of the three types of mirror planes.  In the quadrilateral case, the three mirror edges and the $\partial \Sigma$ edge appear in some cyclic order.  In the pentagonal case, the two $\partial \Sigma$ edges are separated on one side by a mirror edge and on the other side by two mirror edges.  In the hexagonal case, the three mirror and $\partial \Sigma$ edges alternate in pairs.  In each of these potential cases, the labeling is irreducible, so Theorem \ref{Tgroup} applies to show $\firsteigen = \Coord$.

Several authors \cite{Ketover, GL, KOO, Schulz} have studied genus zero free boundary minimal surfaces with Platonic symmetries.  Ketover \cite[Theorem 6.1]{Ketover} constructed one example with each symmetry group; his examples have respectively $4, 6$, and $12$ boundary components. Girouard-Lagac{\'e} conjectured \cite[Remark 1.14]{GL} the existence of additional examples, one with $*234$ symmetry and $8$ boundary components, and two more with $*235$ symmetry with 20 and 32 boundary components.  For all but the $*235$ example with 32 boundary components, they noted that \cite[Theorem 5]{McGrath} shows $\sigma_1 = 1$, and further proved  \cite[Theorem 1.13]{GL} that $\sigma_1 = 1$ for the remaining example by
adapting arguments in \cite{McGrath}.  Finally, we note that Mario Schulz \cite{Schulz} has found numerical evidence for embedded examples for each of the types in Table \ref{Tab1}.

\begin{remark}
Recently, Kapouleas-Zou \cite{KapZou} have constructed families of genus zero free boundary minimal surfaces with $kn$ boundary components (for all sufficiently large $k$ and sufficiently large $n$ in terms of $k$) dihedral symmetry group $*22n$, with $n$ boundary components symmetrically arranged on each of $k$ parallel circles of $\partial \B^3$.  One can check that these surfaces satisfy the hypotheses of Theorem \ref{Tgroup} (recall Remark \ref{Rirred}), so that $\firsteigen = \Coord$.
\end{remark}

\subsection*{Doublings of the equatorial sphere in $\Sph^3$}
$\phantom{ab}$

Kapouleas and the second author constructed \cite{McGrathKap1} doublings of the equatorial sphere
\begin{align*}
\Spheq : = \{ (x_1, x_2, x_3, x_4)\in \Sph^3 \subset \R^4 : x_4 = 0\}.
\end{align*}
  We show that any surface consistent with the symmetries of any of those constructions satisfies the hypotheses of Theorem \ref{Tsphere} and hence has $\Ecal_{\lambda_1} = \Coord$. 

The doublings in \cite{McGrathKap1} depend on discrete parameters $\kcir, m \in \N$, with $\kcir\geq 2$ arbitrary, and $m$ large in terms of $\kcir$.  Each surface constructed resembles two copies of the equatorial sphere $\Sph^2$, joined by catenoidal bridges, with $m$ bridges placed among each of a collection of $\kcir$ parallel circles in maximally symmetric fashion.  The construction allows the option to place two more catenoidal bridges at the poles of $\Spheq$. 

\def\raa{6} 
\def\ang{360/20} 
\def\rot{-25} 
\begin{figure}[h]
\centering
\begin{subfigure}[]{.35\textwidth}
\centering
\begin{tikzpicture}[scale=.6] 
\draw [fill=light-gray, domain=-\ang:\ang, samples=50] plot ({\raa*sin(\x)}, {\raa*sin(\rot)*cos(\x)}); 
\foreach \i in {-1, 1} 
{
\draw [fill=light-gray, domain=0:90, samples=100] plot ({(\i*\raa*sin(\ang)*sin(\x)}, {\raa*cos(\x)*cos(\rot)+\raa*sin(\rot)*sin(\x)*cos(\ang)}); 
}
\draw [fill=light-gray, draw=light-gray] (0, {\raa*cos(\rot)})--({\raa*sin(\ang)}, {\raa*sin(\rot)*cos(\ang)})--({-\raa*sin(\ang)}, {\raa*sin(\rot)*cos(\ang)})--(0, {\raa*cos(\rot)}); 
\draw[dashed] (0, {\raa*sin(\rot)})--(0, {\raa*cos(\rot)}); 
\foreach \i in {0, 1, 2, 3, 4, 5} 
{
\draw [fill=white] (0, {\i*\raa/5+\raa*sin(\rot)/2-\raa/10}) circle ({\raa/35});
}
\end{tikzpicture}
\end{subfigure}
\begin{subfigure}{.35\textwidth}
\centering
\begin{tikzpicture}[scale=.6] 
\draw [fill=light-gray, domain=-\ang:\ang, samples=50] plot ({\raa*sin(\x)}, {\raa*sin(\rot)*cos(\x)}); 
\foreach \i in {-1, 1} 
{
\draw [fill=light-gray, domain=0:90, samples=100] plot ({(\i*\raa*sin(\ang)*sin(\x)}, {\raa*cos(\x)*cos(\rot)+\raa*sin(\rot)*sin(\x)*cos(\ang)}); 
}
\draw [fill=light-gray, draw=light-gray] (0, {\raa*cos(\rot)})--({\raa*sin(\ang)}, {\raa*sin(\rot)*cos(\ang)})--({-\raa*sin(\ang)}, {\raa*sin(\rot)*cos(\ang)})--(0, {\raa*cos(\rot)}); 
\draw[dashed] (0, {\raa*sin(\rot)})--(0, {\raa*cos(\rot)}); 
\draw [fill=white, domain=-2:182, samples=50] plot ({\raa/35*cos(\x)}, {\raa/35*sin(\x)+\raa*sin(\rot)}); 
\foreach \i in {1, 2, 3, 4, 5}
{
\draw [fill=white] (0, {\i*\raa/5+\raa*sin(\rot)}) circle ({\raa/35});
}
\draw [draw=white,thick] ({-\raa/35}, {\raa*sin(\rot)})--({\raa/35}, {\raa*sin(\rot)});
\end{tikzpicture}
\end{subfigure}
\caption{Schematics of fundamental domains for doublings of $\Spheq$.  For clarity, each picture depicts two fundamental domains, bisected by a (dotted) mirror plane.  The middle circles denote catenoidal waists, and the boundary edges are contained in mirror planes of symmetry.  The left example has $\kcir=12$ and the right has $\kcir =11$.  
}
\end{figure}
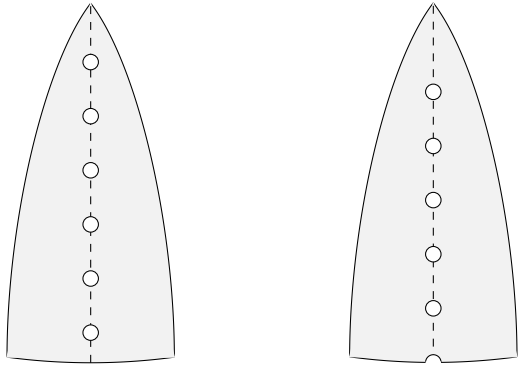

The symmetry group $G$ of the constructions (see \cite[Def. 2.8]{McGrathKap1}) contains $*22m$ as an index-two subgroup and additionally contains the reflection $\Rcap_{\{x_4 =0\}}$, where by convention in this discussion we embed $\R^3$ in $\R^4$ by $(x_1, x_2, x_3)\mapsto (x_1, x_2, x_3, 0)$.

The orbifold $\pi_G(\Sigma)$ is homeomorphic to a fundamental domain $D$ on $\Sigma$ bounded by mirror planes $P_1, P_2, P_3$, and $P_4$.  When there are no bridges at the poles, its boundary is a $(3+\kcir)$-gon consisting of edges, $\kcir$ of which alternate between $P_4$ and $P_1$, and (in order) three more edges contained  in $P_1, P_2$, and $P_3$.  When there are bridges at the poles, its boundary is a $(4+\kcir)$-gon, and the sequence of edges alternating between $P_4$ and $P_1$ contains one additional edge in $P_4$. 
 
In each of the preceding cases Assumption \ref{Agsph} holds and the labeling is irreducible by Remark \ref{Rirred}, so Theorem \ref{Tsphere} applies to show $\Ecal_{\lambda_1} = \Coord$.

\subsection*{Reducible labelings}

\begin{remark}[Surfaces in $\B^3$]
\label{Rred}
Theorem \ref{Tgroup} does not apply to surfaces with the symmetries of desingularizations of the critical catenoid and equatorial disk \cite{KapLi}, or of doublings of the critical catenoid \cite{LDG}: although such surfaces satisfy \ref{Ag}(a), the associated labelings are reducible.

In more detail,  if $\Sigma$ is a desingularization of the critical catenoid and disk, by work in \cite{KapLi} it follows that Assumption \ref{Ag}(a) holds for $G = 2*m$ and some $m\geq 3$; in particular $\pi_G(\Sigma)$ is homeomorphic to a fundamental domain $D$ on $\Sigma$ bounded by mirror planes $P_1, P_2$, a reflection line $L$, and $\partial \B^3$.  Since $\Rcap_L$ exchanges $P_1$ and $P_2$, the labeling is the same as that of the pentagon in Figure \ref{F1}, where $a = \pi_G(\partial \Sigma)$, $b = \pi_G(P_1) = \pi_G(P_2)$, $c = \pi_{G}(L)$, and is in particular reducible.

If $\Sigma$ is a doubling of the critical catenoid, by work in \cite{LDG}, it follows that Assumption \ref{Ag}(a) holds for $G = *22m$ and some $m\geq 3$; in particular, $\pi_G(\Sigma)$ is homeomorphic to a fundamental domain (an octagon) $D$ on $\Sigma$ bounded by mirror planes $P_1, P_2, P_3$ and $\partial \B^3$.  The labeling is the same as that of the octagon in Figure \ref{F1}, where $a = \pi_G (\partial \Sigma)$ , $b = \pi_G(P_2)$, $c = \pi_G(P_1)$, $d = \pi_G(P_3)$ and is reducible.
\end{remark}

\begin{remark}[Surfaces in $\Sph^3$]
\label{Rredsph}
Theorem \ref{Tsphere} does not apply to surfaces with the symmetries of doublings of the Clifford torus \cite{KapYang, Wiygul}; although such surfaces satisfy \ref{Agsph}(a), the associated labelings are reducible.  In particular, if $\Sigma$ is a doubling of the Clifford torus, $\pi_G(\Sigma)$ is homeomorphic to a fundamental domain (a hexagon) $D$ on $\Sigma$ bounded by mirror planes $P_1, P_2, P_3,$ and $P_4$.  The labeling is reducible: one cyclic ordering of the corresponding planes is $P_1, P_2, P_1, P_3, P_4, P_3$. 
\end{remark}


\begin{thebibliography}{10}

\bibitem{Carlotto}
A.~Carlotto, G.~Franz, and M.~B. Schulz.
\newblock Free boundary minimal surfaces with connected boundary and arbitrary
  genus.
\newblock {\em arXiv:2001.04920}, 2020.

\bibitem{Cheng}
S.~Y. Cheng.
\newblock Eigenfunctions and eigenvalues of {L}aplacian.
\newblock In {\em Differential geometry ({P}roc. {S}ympos. {P}ure {M}ath.,
  {V}ol. {XXVII}, {S}tanford {U}niv., {S}tanford, {C}alif., 1973), {P}art 2},
  pages 185--193. Amer. Math. Soc., Providence, R.I., 1975.

\bibitem{ChoeSoret}
J.~Choe and M.~Soret.
\newblock First eigenvalue of symmetric minimal surfaces in {$\Bbb S^3$}.
\newblock {\em Indiana Univ. Math. J.}, 58(1):269--281, 2009.

\bibitem{ChoiWang}
H.~I. Choi and A.~N. Wang.
\newblock A first eigenvalue estimate for minimal hypersurfaces.
\newblock {\em J. Differential Geom.}, 18(3):559--562, 1983.

\bibitem{Conway}
J.~H. Conway, H.~Burgiel, and C.~Goodman-Strauss.
\newblock {\em The symmetries of things}.
\newblock A K Peters, Ltd., Wellesley, MA, 2008.

\bibitem{DeLellis}
C.~De~Lellis and J.~Ramic.
\newblock Min-max theory for minimal hypersurfaces with boundary.
\newblock {\em Annales de l'Institut Fourier}, 68(5):1909--1986, 2018.

\bibitem{Zolotareva}
A.~Folha, F.~Pacard, and T.~Zolotareva.
\newblock Free boundary minimal surfaces in the unit 3-ball.
\newblock {\em Manuscripta Math.}, 154(3-4):359--409, 2017.

\bibitem{FraserSurvey}
A.~Fraser.
\newblock {\em Geometric Analysis: Cetraro, Italy 2018}.
\newblock Springer Nature.

\bibitem{FraserLi}
A.~Fraser and M.~M.~C. Li.
\newblock Compactness of the space of embedded minimal surfaces with free
  boundary in three-manifolds with nonnegative {R}icci curvature and convex
  boundary.
\newblock {\em J. Differential Geom.}, 96(2):183--200, 2014.

\bibitem{FSdisk}
A.~Fraser and R.~Schoen.
\newblock Uniqueness theorems for free boundary minimal disks in space forms.
\newblock {\em Int. Math. Res. Not. IMRN}, (17):8268--8274, 2015.

\bibitem{FraserSchoen}
A.~Fraser and R.~Schoen.
\newblock Sharp eigenvalue bounds and minimal surfaces in the ball.
\newblock {\em Invent. Math.}, 203(3):823--890, 2016.

\bibitem{GL}
A.~Girouard and J.~Lagac\'{e}.
\newblock Large {S}teklov eigenvalues via homogenisation on manifolds.
\newblock {\em Invent. Math.}, 226(3):1011--1056, 2021.

\bibitem{KapLi}
N.~Kapouleas and M.~M.-C. Li.
\newblock Free boundary minimal surfaces in the unit three-ball via
  desingularization of the critical catenoid and the equatorial disc.
\newblock {\em J. Reine Angew. Math.}, 776:201--254, 2021.

\bibitem{McGrathKap1}
N.~Kapouleas and P.~McGrath.
\newblock Minimal surfaces in the round three-sphere by doubling the equatorial
  two-sphere, {II}.
\newblock {\em Comm. Pure Appl. Math.}, 72(10):2121--2195, 2019.

\bibitem{LDG}
N.~Kapouleas and P.~McGrath.
\newblock Generalizing the linearized doubling approach and new minimal
  surfaces and self-shrinkers via doubling.
\newblock  {\em To appear, Camb. J. Math., arXiv:2001.04240}.

\bibitem{KapWiygul}
N.~Kapouleas and D.~Wiygul.
\newblock Free-boundary minimal surfaces with connected boundary in the
  $3$-ball by tripling the equatorial disc.
\newblock {\em To appear, J. Differential Geom., arXiv:1711.00818}.

\bibitem{KapYang}
N.~Kapouleas and S.-D. Yang.
\newblock Minimal surfaces in the three-sphere by doubling the {C}lifford
  torus.
\newblock {\em Amer. J. Math.}, 132(2):257--295, 2010.

\bibitem{KapZou}
N.~Kapouleas and J.~Zou.
\newblock Free boundary minimal surfaces in the euclidean three-ball close to
  the boundary.
\newblock {\em arXiv:2111.11308}, 2021.

\bibitem{KPS}
H.~Karcher, U.~Pinkall, and I.~Sterling.
\newblock New minimal surfaces in {$S^3$}.
\newblock {\em J. Differential Geom.}, 28(2):169--185, 1988.

\bibitem{Kokarev}
M.~Karpukhin, G.~Kokarev, and I.~Polterovich.
\newblock Multiplicity bounds for {S}teklov eigenvalues on {R}iemannian
  surfaces.
\newblock {\em Ann. Inst. Fourier (Grenoble)}, 64(6):2481--2502, 2014.

\bibitem{Ketover}
D.~Ketover.
\newblock Equivariant min-max theory.
\newblock {\em arXiv:1612.08692}, 2016.

\bibitem{KWW}
R.~Kusner and P.~Wang.
\newblock Symmetric minimal surfaces in {$S^{3}$} as conformally-constrained
  {W}illmore minimizers in {$S^{n}$}.
\newblock {\em In preparation}.

\bibitem{WKusner}
W.~Kusner.
\newblock {\em \emph{Personal communication}}, 2020.

\bibitem{Proc}
J.~R. Kuttler and V.~G. Sigillito.
\newblock An inequality of a {S}tekloff eigenvalue by the method of defect.
\newblock {\em Proc. Amer. Math. Soc.}, 20:357--360, 1969.

\bibitem{Lawson}
H.~B. Lawson, Jr.
\newblock Complete minimal surfaces in {$S^{3}$}.
\newblock {\em Ann. of Math. (2)}, 92:335--374, 1970.

\bibitem{LiSurvey}
M.~M.~C. Li.
\newblock Free boundary minimal surfaces in the unit ball : recent advances and
  open questions.
\newblock {\em Proceedings of the International Consortium of Chinese
  Mathematicians}, pages 401--435, 2020.

\bibitem{LiZhou}
M.~M.-C. Li and X.~Zhou.
\newblock Min-max theory for free boundary minimal hypersurfaces
  {I}---{R}egularity theory.
\newblock {\em J. Differential Geom.}, 118(3):487--553, 2021.

\bibitem{LimaMenezes}
V.~Lima and A.~Menezes.
\newblock A two-piece property for free boundary minimal surfaces in the ball.
\newblock {\em Trans. Amer. Math. Soc.}, 374(3):1661--1686, 2021.

\bibitem{Maximo}
D.~Maximo, I.~Nunes, and G.~Smith.
\newblock Free boundary minimal annuli in convex three-manifolds.
\newblock {\em J. Differential Geom.}, 106(1):139--186, 2017.

\bibitem{McGrath}
P.~McGrath.
\newblock A characterization of the critical catenoid.
\newblock {\em Indiana Univ. Math. J.}, 67(2):889--897, 2018.

\bibitem{Nitsche}
J.~C.~C. Nitsche.
\newblock Stationary partitioning of convex bodies.
\newblock {\em Arch. Rational Mech. Anal.}, 89(1):1--19, 1985.

\bibitem{KOO}
E.~Oudet, C.-Y. Kao, and B.~Osting.
\newblock Computation of free boundary minimal surfaces {\it via} extremal
  {S}teklov eigenvalue problems.
\newblock {\em ESAIM Control Optim. Calc. Var.}, 27:Paper No. 34, 30, 2021.

\bibitem{Ros}
A.~Ros.
\newblock A two-piece property for compact minimal surfaces in a three-sphere.
\newblock {\em Indiana Univ. Math. J.}, 44(3):841--849, 1995.

\bibitem{Schulz}
M.~Schulz.
\newblock {\em \emph{Personal communication}}, 2021.

\bibitem{Wiygul}
D.~Wiygul.
\newblock Minimal surfaces in the 3-sphere by stacking {C}lifford tori.
\newblock {\em J. Differential Geom.}, 114(3):467--549, 2020.

\bibitem{Yau:Problems}
S.~T. Yau.
\newblock Problem section.
\newblock In {\em Seminar on {D}ifferential {G}eometry}, volume 102 of {\em
  Ann. of Math. Stud.}, pages 669--706. Princeton Univ. Press, Princeton, N.J.,
  1982.

\end{thebibliography}
\end{document}